\documentclass[11pt]{amsart}

\usepackage[T1]{fontenc}
\usepackage{lmodern}
\usepackage{microtype}
\usepackage{mathtools}
\usepackage{amssymb}
\usepackage{enumitem}
\usepackage{booktabs}
\usepackage{xcolor}
\usepackage[margin=1.12in]{geometry}
\usepackage[colorlinks=true,linkcolor=blue!55!black,citecolor=green!40!black,
            urlcolor=blue!55!black]{hyperref}
\hypersetup{
  pdftitle={SLN Quantum-Torus Summands and Visible Nielsen Numbers},
  pdfauthor={Ahmet Selman Kaya},
  pdfsubject={SLN quantum-torus summands and visible Nielsen numbers},
  pdfkeywords={SLN skein module, quantum-torus summand, empty-skein summand, torus bundle, Nielsen number, finite observer, twisted Hochschild homology}
}

\newtheorem{theorem}{Theorem}[section]
\newtheorem{proposition}[theorem]{Proposition}
\newtheorem{lemma}[theorem]{Lemma}
\newtheorem{corollary}[theorem]{Corollary}
\theoremstyle{definition}
\newtheorem{example}[theorem]{Example}
\theoremstyle{remark}
\newtheorem{remark}[theorem]{Remark}

\newcommand{\Z}{\mathbb Z}
\newcommand{\F}{\mathbb F}
\newcommand{\K}{\mathbb K}
\newcommand{\coker}{\operatorname{coker}}
\newcommand{\im}{\operatorname{im}}
\newcommand{\Sk}{\operatorname{Sk}}
\newcommand{\SkAlg}{\operatorname{SkAlg}}
\newcommand{\HH}{\operatorname{HH}}
\newcommand{\Fix}{\operatorname{Fix}}
\newcommand{\tr}{\operatorname{tr}}

\title[$\mathrm{SL}_N$ Quantum-Torus Summands and Visible Nielsen Numbers]
{\texorpdfstring{$\mathrm{SL}_N$}{SLN} Quantum-Torus Summands and
Visible Nielsen Numbers}

\author{Ahmet Selman Kaya}
\address{Department of Mathematics, Ege University, Izmir, T\"urkiye}
\date{July 2026}

\subjclass[2020]{Primary 57K31, 55M20; Secondary 57R56, 16E40, 20C30}
\keywords{$\mathrm{SL}_N$ skein module, empty-skein summand, quantum torus,
torus bundle, twisted Hochschild homology, Nielsen number, finite observer}

\begin{document}

\begin{abstract}
Let $M_\gamma=T^2\times_\gamma S^1$, where
$\gamma\in\mathrm{SL}_2(\Z)$ is hyperbolic.  We compute, for every
$N\geq2$, the empty-skein, or quantum-torus, direct summand
$\HH_0^\gamma(\SkAlg_{\mathrm{SL}_N}(T^2))$.  Each Burnside term is a
quotient of products of the periodic Nielsen numbers
$N_k=|\det(I-\gamma^k)|$ times one visible Nielsen number
$V_{e\Z^2}(f_\gamma)$ arising from torsion in a Weyl coinvariant lattice.
Only moduli $e\mid N$ occur, so the rank-$N$ summand is determined by
$N_1,\ldots,N_N$ and the canonical observer values
$V_{e\Z^2}(f_\gamma)$ for $e\mid N$.

For $N=3$ this gives an explicit formula with the single correction
$V_{3\Z^2}(f_\gamma)$.  As an application, we
exhibit infinitely many pairs of non-homeomorphic hyperbolic torus bundles
whose $\mathrm{GL}_N$-skein-module dimensions agree for every $N$, while
their $\mathrm{SL}_3$ empty-skein summands differ in dimension by six.
They also have identical periodic Nielsen data and budget-optimized
finite-cover visibility profiles at every iterate.
\end{abstract}

\maketitle
\enlargethispage{2pt}

\section{Introduction}

For a mapping class $\gamma\in\mathrm{SL}_2(\Z)$, put
$M_\gamma=T^2\times_\gamma S^1$.  Bierent--Jordan--Vancraeynest--Vazirani
computed the $\mathrm{GL}_N$-skein dimensions of these mapping tori for all
$N$ \cite{BierentJordanVancraeynestVazirani}; in the hyperbolic case their
partition function is determined by $\tr(\gamma)$.  They also observed that
these dimensions distinguish the mutant mapping tori associated with a
shear and its negative already for $N=2$
\cite[Remark~1.9]{BierentJordanVancraeynestVazirani}.  One application of
our formulas gives a complementary limitation:
Theorem~\ref{thm:infinite-family} exhibits
infinitely many pairs of non-homeomorphic hyperbolic torus bundles with the
same $\mathrm{GL}_N$-skein dimensions for every $N$, but with
$\mathrm{SL}_3$ quantum-torus summand dimensions differing by six.
Proposition~\ref{prop:gln-freeness} supplies a self-contained
explanation: on the $\mathrm{GL}_N$ permutation lattice the Weyl
coinvariants are torsion-free, so no observer correction can arise
there; the correction is created exactly by the coinvariant torsion of
the $\mathrm{SL}_N$ character lattice.

Kinnear decomposed the skein module of $M_\gamma$ into twisted Hochschild
homologies and its skein-algebra term into Weyl-group sectors
\cite{Kinnear}.  This term is the whole module for $\mathrm{GL}_N$, but only
a direct summand for $\mathrm{SL}_N$.  Our main result,
Theorem~\ref{thm:general-sln-formula}, computes this direct summand for every
$N\geq2$.  The formula is indexed by commuting pairs in $S_N$: the free
part of the relevant Weyl coinvariant lattice contributes periodic Nielsen
numbers, while its torsion contributes one visible Nielsen number.  In
particular, Corollary~\ref{cor:data-dependence} shows that the rank-$N$
summand is determined by $N_1,\ldots,N_N$ together with
$V_{e\Z^2}(f_\gamma)$ for divisors $e$ of $N$.
Kinnear singled out precisely this centralizer problem in
\cite[Question~1.8]{Kinnear}: understanding the Weyl-group centralizers in
general rank would determine the skein-algebra summand for
$\mathrm{SL}_N$.  Thus, for hyperbolic monodromy,
Theorem~\ref{thm:general-sln-formula} answers that question for this
canonical summand.  It does not address the additional endomorphism-algebra
summands of the full $\mathrm{SL}_N$-skein module.

The first case beyond $\mathrm{SL}_2$ is especially explicit.  If
\[
 N_k=\lvert\det(I-\gamma^k)\rvert
 \quad\text{and}\quad
 T_3(\gamma)=
 \left|\ker\bigl(3:\coker(I-\gamma)\to\coker(I-\gamma)\bigr)\right|,
\]
then Proposition~\ref{prop:mod3-observer} and
Theorem~\ref{thm:main-formula} give
\[
\begin{aligned}
\dim \HH_0^\gamma\bigl(\SkAlg_{\mathrm{SL}_3}(T^2)\bigr)
&=
\frac{N_1^2+6N_1+3N_2+2N_3/N_1}{6}+T_3(\gamma)\\
&=
\frac{N_1^2+6N_1+3N_2+2N_3/N_1}{6}+V_{3\Z^2}(f_\gamma).
\end{aligned}
\]
The second expression comes from applying the finite-cover resolution
viewpoint developed in \cite{Kaya}: Proposition
\ref{prop:mod3-observer} identifies $T_3(\gamma)$ with the visible Nielsen
number $V_{3\Z^2}(f_\gamma)$ of the canonical mod-$3$ observer.  Thus the
internal structure of the Reidemeister cokernel, invisible to its order and
to the optimized toral visibility profile, supplies the separating term.
We compute only this direct summand, not the additional web-theoretic
summands of the full $\mathrm{SL}_N$-skein module
\cite{Douglas,CremaschiDouglas}.

\section{The quantum-torus summand}

Throughout, $N\geq2$ and $\K$ is a characteristic-zero field containing
the generic quantum parameter required for $\mathrm{SL}_N$ skein theory.
Put
\[
  \Lambda_N=\Z^N/\Z(1,\ldots,1),
\]
the character lattice of a maximal torus of $\mathrm{SL}_N$, and let
$W=S_N$ act on $\Lambda_N$ by permuting coordinates.  The skein algebra of
$T^2$ is represented by a quantum torus on
$\Lambda_N\otimes H_1(T^2;\Z)$; see
\cite{QuantumCharacter,SkeinsOnTori}.

We use the following consequence of Kinnear's decomposition
\cite[Theorem~1.4 and Corollary~3.12]{Kinnear}.  For $w\in S_N$, let
$M_w$ denote its action on $\Lambda_N$, and define
\[
 C_w^{(N)}(\gamma)
 :=
 \coker\bigl(I-M_w\otimes\gamma:
       \Lambda_N\otimes\Z^2\longrightarrow
       \Lambda_N\otimes\Z^2\bigr).
\]
When $\gamma$ is hyperbolic, these groups are finite.  Moreover,
\begin{equation}\label{eq:kinnear-decomposition}
 \HH_0^\gamma\bigl(\SkAlg_{\mathrm{SL}_N}(T^2)\bigr)
 \cong
 \bigoplus_{[w]\in\operatorname{cl}(S_N)}
       \K[C_w^{(N)}(\gamma)]_{C(w)},
\end{equation}
where $C(w)$ is the centralizer of $w$ in $S_N$ and the subscript denotes
coinvariants.  In the hyperbolic case, Kinnear's normalized basis of the
$w$-summand is indexed by $C_w^{(N)}(\gamma)$
\cite[Proposition~3.11 and Corollary~3.12]{Kinnear}.  The next lemma
records that the centralizer acts on this basis by permutations, with no
monomial phases; the dimension of the $w$-summand is therefore the
number of $C(w)$-orbits in the finite set $C_w^{(N)}(\gamma)$.

\begin{lemma}[Phase-freeness of the centralizer action]
\label{lem:phase-free}
Let $\gamma$ be hyperbolic, let $w\in S_N$, and let $g\in C(w)$.  Put
$A_w=M_w\otimes\gamma$, let $\Omega$ denote the invariant skew form on
$\Lambda_N\otimes\Z^2$, and let
\[
 \widetilde m_x
 =q^{\frac12\Omega((I-A_w)^{-1}x,\,x)}\,m_x,
 \qquad x\in\Lambda_N\otimes\Z^2,
\]
denote Kinnear's normalized basis \cite[Proposition~3.11]{Kinnear}.
Then
\[
 g(\widetilde m_x)=\widetilde m_{gx}
 \qquad\text{for all }x.
\]
Consequently the $C(w)$-action on the $w$-summand is the permutation
action induced by the natural action of $C(w)$ on
$C_w^{(N)}(\gamma)$, and the dimension of the coinvariants is the
number of $C(w)$-orbits.
\end{lemma}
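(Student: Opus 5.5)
The plan is to compute $g(\widetilde m_x)$ directly from two facts. First, $g$ acts on the quantum torus by the lattice automorphism $g\otimes I$, so $g(m_x)=m_{gx}$. Second, the normalizing exponent is invariant under $g$. For the first fact, recall that the quantum torus on $\Lambda_N\otimes\Z^2$ has relations $m_xm_y=q^{\Omega(x,y)}m_{x+y}$, with $\Omega=\kappa\otimes\omega$. Here $\kappa$ is the $S_N$-invariant form on $\Lambda_N$ (coming from the standard form on $\Z^N$) and $\omega$ is the intersection form on $\Z^2$. The Weyl group acts on the skein algebra through the lattice, $m_x\mapsto m_{(g\otimes I)x}$, without signs or phases. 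This is the standard description of the $W$-action on $\SkAlg$ via the quantum torus \cite{QuantumCharacter,SkeinsOnTori} and is the action used in Kinnear's Weyl sectors. Because $\Omega$ is $g$-invariant, this rule is an algebra automorphism. I will write $g$ for $g\otimes I$ throughout.

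The key computation is as follows. Since $g\in C(w)$, we have $gM_w=M_wg$ on $\Lambda_N$. Hence $g$ commutes with $A_w=M_w\otimes\gamma$, and therefore with $(I-A_w)^{-1}$, which exists over $\mathbb Q$ because $\gamma$ is hyperbolic. Then
\[
\Omega\bigl((I-A_w)^{-1}gx,\,gx\bigr)=\Omega\bigl(g(I-A_w)^{-1}x,\,gx\bigr)=\Omega\bigl((I-A_w)^{-1}x,\,x\bigr),
\]
where the last step uses $g$-invariance of $\Omega$. This gives $g(\widetilde m_x)=q^{\frac12\Omega((I-A_w)^{-1}x,x)}m_{gx}=\widetilde m_{gx}$. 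One must also check that the choice of $q^{1/2}$-powers is consistent; since the exponent is literally the same rational number, no ambiguity arises.

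For the consequence, I would use Kinnear's Proposition~3.11 and Corollary~3.12. In the $w$-summand, $\HH_0$ of the $w$-twisted quantum torus, the class of $\widetilde m_x$ depends only on the image $\bar x\in C_w^{(N)}(\gamma)$, and these classes form a basis. The map $g$ preserves $\im(I-A_w)$ because it commutes with $A_w$, so it descends to $C_w^{(N)}(\gamma)$. By the computation above, $g$ maps the basis element $[\widetilde m_x]$ to $[\widetilde m_{gx}]$. Hence $C(w)$ acts by permuting the basis indexed by the finite set $C_w^{(N)}(\gamma)$. The coinvariants of a permutation representation over a field of characteristic zero have dimension equal to the number of orbits.

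The main obstacle I expect is the first fact: confirming that, in Kinnear's conventions, the centralizer acts on the $w$-sector by exactly $m_x\mapsto m_{gx}$ rather than by a twisted version. Such a twist could appear if the identification of the $w$-sector with twisted $\HH_0$ involves a choice of lift or normalization that is not $C(w)$-equivariant, for example a cocycle phase $q^{c(g,x)}$. I would first check this against the construction in \cite{Kinnear}. If a phase $q^{c(g,x)}$ does appear, I would show that it is a coboundary absorbed by the normalization $\widetilde m_x$, which is precisely the role of the normalizing factor.
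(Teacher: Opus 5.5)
Your proposal is correct and follows the paper's proof essentially step for step: you use the phase-free action $g(m_x)=m_{gx}$ coming from $\Omega$-invariance, the commutation of $g$ with $(I-A_w)^{-1}$ (a genuine $\mathbb{Q}$-linear map by hyperbolicity), the resulting invariance of the exponent, and orbit counting for a permutation module in characteristic zero. The obstacle you flag is settled in the paper by citation, not by a coboundary argument: Kinnear's Notation~2.15 and Lemma~2.19 give the phase-free action, and the proof of Kinnear's Theorem~3.5 shows that the $\gamma$-twisted centralizer relations defining the sector coinvariants are equivalent to the untwisted relations $a\sim g(a)$, so your fallback is never needed.
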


\begin{proof}
The Weyl group acts on the quantum torus by the basis-permuting
automorphisms $g(m_x)=m_{gx}$; no scalar factors occur, because the
action preserves the form $\Omega$
\cite[Notation~2.15 and Lemma~2.19]{Kinnear} and hence intertwines the
multiplication $m_xm_y=q^{\frac12\Omega(x,y)}m_{x+y}$.

Hyperbolicity gives $\det(I-A_w)\neq0$: the eigenvalues of $A_w$ are
$\zeta\lambda^{\pm1}$ with $\zeta$ a root of unity and
$\lvert\lambda\rvert>1$.  Hence $(I-A_w)^{-1}$ is a genuine
$\mathbb Q$-linear map, and the exponent
$\Omega((I-A_w)^{-1}x,x)$ involves no choice of preimage; compare
\cite[Remark~3.10]{Kinnear}, whose coset ambiguity arises only when
$I-A_w$ is singular.

Since $g$ acts on the $\Lambda_N$ factor and $\gamma$ on the $\Z^2$
factor, $g$ commutes with $\gamma$; since $g\in C(w)$, it commutes with
$M_w$; hence $g$ commutes with $A_w$ and with $(I-A_w)^{-1}$.  Using
the $\Omega$-invariance of $g$ once more,
\[
 \Omega\bigl((I-A_w)^{-1}gx,\,gx\bigr)
 =\Omega\bigl(g(I-A_w)^{-1}x,\,gx\bigr)
 =\Omega\bigl((I-A_w)^{-1}x,\,x\bigr),
\]
whence $g(\widetilde m_x)=\widetilde m_{gx}$.

Finally, by the proof of \cite[Theorem~3.5]{Kinnear}, the residual
relations defining the coinvariants in
\eqref{eq:kinnear-decomposition} are the untwisted centralizer
relations $a\sim g(a)$, $g\in C(w)$: the $\gamma$-twisted centralizer
relations appearing there are shown to be equivalent to the untwisted
ones.  The coinvariants are therefore those of a permutation module,
and in characteristic zero their dimension is the number of orbits.
\end{proof}

\begin{lemma}[Cycle determinant]\label{lem:cycle-determinant}
Let $w\in S_N$ have cycle lengths $c_1,\ldots,c_s$.  If
$\det(I-\gamma^{c_i})\neq0$ for all $i$, then
\[
 \lvert C_w^{(N)}(\gamma)\rvert
 =
 \frac{\prod_{i=1}^s\lvert\det(I-\gamma^{c_i})\rvert}
      {\lvert\det(I-\gamma)\rvert}.
\]
Compare \cite[Lemma~3.1]{BierentJordanVancraeynestVazirani}, where the
corresponding cycle decomposition is obtained on the permutation lattice
used in the $\mathrm{GL}_N$ calculation.
\end{lemma}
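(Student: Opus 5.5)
The plan is to compute the order of $C_w^{(N)}(\gamma)$ as the absolute value of a determinant, using the standard fact that a $\Z$-linear endomorphism $L$ of a lattice with $\det L\neq 0$ has finite cokernel of order $\lvert\det L\rvert$ (Smith normal form). Everything then reduces to computing $\det(I-M_w\otimes\gamma)$ on $\Lambda_N\otimes\Z^2$. I would do this by passing to the permutation lattice $\Z^N$, where the cycle structure of $w$ is visible, and then dividing out the contribution of the diagonal line.

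\emph{Step 1 (the permutation lattice).} Let $P_w$ be the permutation matrix of $w$ on $\Z^N$. The lattice $\Z^N$ splits as $\bigoplus_i \Z^{c_i}$ over the cycles of $w$, and on each summand $P_w$ acts as a cyclic permutation matrix $Z_{c}$. Hence
\[
 \det\bigl(I-P_w\otimes\gamma\bigr)=\prod_{i=1}^s\det\bigl(I-Z_{c_i}\otimes\gamma\bigr).
\]
To evaluate one factor, I would diagonalize over $\mathbb C$. The eigenvalues of $Z_c\otimes\gamma$ are $\zeta\mu$, where $\zeta$ runs over the $c$-th roots of unity and $\mu$ over the eigenvalues of $\gamma$. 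The identity $\prod_{\zeta^c=1}(1-\zeta x)=1-x^c$ then gives
\[
 \det(I-Z_c\otimes\gamma)=\prod_\mu\bigl(1-\mu^c\bigr)=\det(I-\gamma^c).
\]
Alternatively, one can see this by block elimination on the block-circulant matrix.

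\emph{Step 2 (quotient by the diagonal).} The sequence
\[
 0\to\Z(1,\ldots,1)\otimes\Z^2\to\Z^N\otimes\Z^2\to\Lambda_N\otimes\Z^2\to0
\]
is exact and equivariant for $P_w\otimes\gamma$. Since $w$ fixes $(1,\ldots,1)$, the induced action on the sub is just $\gamma$. In a basis adapted to this sequence the matrix of $I-P_w\otimes\gamma$ is block triangular. Therefore
\[
 \det(I-P_w\otimes\gamma)=\det(I-\gamma)\cdot\det(I-M_w\otimes\gamma),
\]
and so
\[
 \det(I-M_w\otimes\gamma)=\frac{\prod_{i}\det(I-\gamma^{c_i})}{\det(I-\gamma)}.
\]
Taking absolute values and applying the cokernel-order fact gives the formula.

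\emph{Main obstacle.} The only delicate point is nondegeneracy: the division by $\det(I-\gamma)$ and the finiteness of the cokernel both require nonzero determinants. The hypothesis is only that $\det(I-\gamma^{c_i})\neq0$ for each $i$, which is stated for the powers and not for $\gamma$ itself. I would handle this through the polynomial identity $1-x^{c}=(1-x)(1+x+\cdots+x^{c-1})$ applied to the eigenvalues of $\gamma$. It shows that $\det(I-\gamma)$ divides $\det(I-\gamma^{c})$ as algebraic numbers, so $\det(I-\gamma)\neq0$ whenever any $\det(I-\gamma^{c_i})\neq 0$. Then both $I-P_w\otimes\gamma$ and $I-\gamma$ are nonsingular, and hence so is $I-M_w\otimes\gamma$. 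With this in hand, the Smith-normal-form identification $\lvert\coker L\rvert=\lvert\det L\rvert$ applies directly, and the lemma follows.
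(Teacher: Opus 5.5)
Your proof is correct and follows the paper's route: the eigenvalue identity $\prod_{\zeta^c=1}(1-\zeta\mu)=1-\mu^c$ on each cycle block of the permutation lattice $\Z^N$, followed by quotienting out the diagonal copy of $\Z^2$, on which $P_w\otimes\gamma$ restricts to $\gamma$. The differences are minor. You factor the determinant block-triangularly and use $\lvert\coker L\rvert=\lvert\det L\rvert$, where the paper passes through the exact sequence of finite cokernels. You also derive $\det(I-\gamma)\neq0$ from the lemma's stated hypothesis via $I-\gamma^{c}=(I-\gamma)(I+\gamma+\cdots+\gamma^{c-1})$, whereas the paper appeals to hyperbolicity, which is not among the lemma's assumptions; your treatment of this point is slightly more careful.
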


\begin{proof}
First use the permutation lattice $\Z^N$.  On the block belonging to a
cycle of length $c$, the eigenvalues of the permutation matrix are the
$c$th roots of unity.  If $\lambda,\lambda^{-1}$ are the eigenvalues of
$\gamma$, then
\[
 \prod_{\zeta^c=1}(1-\zeta\lambda)(1-\zeta\lambda^{-1})
 =
 (1-\lambda^c)(1-\lambda^{-c})
 =
 \det(I-\gamma^c).
\]
Multiplying over the cycles gives
\[
 \det(I-P_w\otimes\gamma\mid\Z^N\otimes\Z^2)
 =
 \prod_i\det(I-\gamma^{c_i}).
\]

Now tensor the exact sequence
\[
 0\longrightarrow\Z(1,\ldots,1)\longrightarrow\Z^N
 \longrightarrow\Lambda_N\longrightarrow0
\]
with $\Z^2$.  The restriction of $P_w\otimes\gamma$ to the diagonal
copy of $\Z^2$ is $\gamma$.  Hyperbolicity makes all the relevant
difference maps injective, so the induced exact sequence of finite
cokernels shows that the order on $\Lambda_N\otimes\Z^2$ is the order on
$\Z^N\otimes\Z^2$ divided by $|\det(I-\gamma)|$.
\end{proof}

\begin{lemma}[Fixed points on a difference cokernel]
\label{lem:fixed-cokernel}
Let $L$ be a lattice, let $F:L\to L$ be injective with finite cokernel,
and let $g\in\operatorname{Aut}(L)$ commute with $F$.  Then
\[
 \bigl|\Fix(g\curvearrowright\coker F)\bigr|
 =
 [L:FL+(g-I)L].
\]
Equivalently, the right-hand side is the order of the cokernel of the
endomorphism induced by $F$ on $L/(g-I)L$.
For the corresponding cyclic-sector calculation, compare
\cite[Lemma~3.8]{BierentJordanVancraeynestVazirani}.
\end{lemma}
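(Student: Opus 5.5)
We compute the fixed subgroup of $g$ acting on $A:=\coker F=L/FL$ by counting orders along the exact sequence attached to the endomorphism $g-I$ of $A$. Since $g$ commutes with $F$, it preserves $FL$, so $g$ and $g-I$ descend to $A$. Then $\Fix(g\curvearrowright A)=\ker\bigl((g-I)\colon A\to A\bigr)$. Because $A$ is finite, the exact sequence
\[
 0\longrightarrow\ker(g-I\mid A)\longrightarrow A\xrightarrow{\;g-I\;}A
 \longrightarrow\coker(g-I\mid A)\longrightarrow0
\]
gives $|\ker(g-I\mid A)|=|\coker(g-I\mid A)|$. It therefore suffices to identify the cokernel:
\[
 \coker(g-I\mid A)=A/(g-I)A=L/\bigl(FL+(g-I)L\bigr),
\]
and its order is exactly $[L:FL+(g-I)L]$. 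This proves the first formula. The index is finite because it is bounded by $|A|$.

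For the equivalent form, let $\overline L=L/(g-I)L$. This is a finitely generated abelian group, possibly with torsion and a free part. Since $F$ commutes with $g$, we have $F(g-I)L=(g-I)FL\subseteq(g-I)L$, so $F$ induces $\overline F\colon\overline L\to\overline L$. Its cokernel is
\[
 \overline L/\overline F\,\overline L
 =L/\bigl(FL+(g-I)L\bigr),
\]
by the third isomorphism theorem. This is the same group as in the first formula, so the two descriptions agree.

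The only point needing care is the counting step. Equality of kernel and cokernel orders uses only that $A$ is finite, which holds by hypothesis. Injectivity of $F$ is not actually needed beyond guaranteeing a finite cokernel. No step here is a real obstacle; the essential point is recognizing that the fixed points form a kernel whose order can be traded for a cokernel order on the finite group $A$.
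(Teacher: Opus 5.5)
Your proposal is correct and matches the paper's proof: both use that $g-I$ on the finite group $\coker F$ has kernel and cokernel of equal order, and identify that cokernel with $L/(FL+(g-I)L)$. Both then read this quotient as the cokernel of the map induced by $F$ on $L/(g-I)L$; you simply spell out the well-definedness checks that the paper leaves implicit.
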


\begin{proof}
Put $C=\coker F$.  The endomorphism $g-I$ of the finite abelian group
$C$ has kernel and cokernel of the same order.  Its cokernel is
\[
 C/(g-I)C
 \cong L/(FL+(g-I)L),
\]
which proves the formula.  Since $F$ commutes with $g$, it descends to
$L/(g-I)L$, giving the equivalent description.
\end{proof}

\section{A general visible-Nielsen formula for
\texorpdfstring{$\mathrm{SL}_N$}{SLN}}

For $k,m\geq1$, write
\[
 N_k=\lvert\det(I-\gamma^k)\rvert,
 \qquad
 V_m=V_{m\Z^2}(f_\gamma).
\]
Here $V_m$ is the visible Nielsen number obtained by pushing the
Reidemeister trace of $f_\gamma$ to the canonical quotient
$\Z^2/m\Z^2$; see \cite[Definitions~2.2--2.3]{Kaya}.  If $\gamma_m$
denotes reduction modulo $m$, then
\begin{equation}\label{eq:general-observer-identity}
 V_m
 =\left|\coker\bigl(I-\gamma_m:(\Z/m\Z)^2\to(\Z/m\Z)^2\bigr)\right|
 =\left|\ker\bigl(I-\gamma_m:(\Z/m\Z)^2\to(\Z/m\Z)^2\bigr)\right|.
\end{equation}
Indeed, all fixed-point classes of a hyperbolic toral automorphism have
the same local index, so reduction modulo $m$ produces no cancellation in
the observed Reidemeister trace.  The second equality holds for every
endomorphism of a finite abelian group.

Let $g\in S_N$ have cycles $O_1,\ldots,O_s$ of lengths
$c_1,\ldots,c_s$, and put
\[
 Q_g:=\Lambda_N/(g-I)\Lambda_N,
 \qquad
 e(g):=\gcd(c_1,\ldots,c_s).
\]

\begin{lemma}[The Weyl coinvariant lattice]
\label{lem:weyl-coinvariant-lattice}
There is an exact sequence
\begin{equation}\label{eq:coinvariant-exact-sequence}
 0\longrightarrow\Z
 \xrightarrow{\;1\mapsto(c_1,\ldots,c_s)\;}
 \Z^s\longrightarrow Q_g\longrightarrow0.
\end{equation}
Consequently,
\[
 Q_g\cong\Z^{s-1}\oplus\Z/e(g)\Z.
\]
If $w\in C(g)$, then $w$ permutes the cycles of $g$ and
\eqref{eq:coinvariant-exact-sequence} is equivariant for the induced
permutation $\bar w$ of $\{O_1,\ldots,O_s\}$, with trivial action on the
left-hand copy of $\Z$.
\end{lemma}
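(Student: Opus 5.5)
We first compute the coinvariants of the permutation lattice and then pass to the quotient by the diagonal. For $\Z^N$ with $g$ acting as the permutation $P_g$, the coinvariant module $\Z^N/(P_g-I)\Z^N$ is free on the orbits. The map sending the basis vector $e_j$ to the class $[O_i]$ of the cycle containing $j$ is surjective, and its kernel is spanned by the differences $e_j-e_{g(j)}$. Hence
\[
 \Z^N/(P_g-I)\Z^N\cong\Z^s.
\]
Since coinvariants are a right-exact functor, we apply them to the sequence $\Z(1,\ldots,1)\to\Z^N\to\Lambda_N\to0$. The diagonal is $g$-fixed, so its coinvariants are $\Z$. This gives an exact sequence
\[
 \Z\longrightarrow\Z^s\longrightarrow Q_g\longrightarrow0,
\]
in which the first map sends $1$ to the image of $(1,\ldots,1)$, namely $\sum_i c_i[O_i]$. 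Equivalently, $(P_g-I)\Lambda_N$ is the image of $(P_g-I)\Z^N$, so $Q_g=\Z^N/\bigl((P_g-I)\Z^N+\Z(1,\ldots,1)\bigr)$, and this matches the sequence directly.

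Next we check left-exactness. The map $1\mapsto(c_1,\ldots,c_s)$ is injective because every $c_i\geq1$, so the vector is nonzero in the torsion-free group $\Z^s$. This yields \eqref{eq:coinvariant-exact-sequence}.

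To identify $Q_g$, we use that the vector $(c_1,\ldots,c_s)$ equals $e(g)$ times the primitive vector $v=(c_1,\ldots,c_s)/e(g)$. A primitive vector extends to a basis of $\Z^s$, for instance by completing via Smith normal form or using that its coordinates have gcd $1$. Therefore
\[
 \Z^s/\Z\, e(g)v\cong\Z^{s-1}\oplus\Z/e(g)\Z.
\]
Completing a primitive vector to a basis is the only step that needs an argument, and it is standard.

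Finally we treat equivariance. If $w\in C(g)$, then $w$ maps $g$-orbits to $g$-orbits, since $w(g^k j)=g^k w(j)$. The map $P_w$ preserves $(P_g-I)\Z^N$ and fixes $(1,\ldots,1)$. The induced map on $\Z^s$ sends $[O_i]$ to $[w(O_i)]$, which is the permutation $\bar w$. It also fixes $\sum_i c_i[O_i]$, because $w$ maps each orbit to one of the same length. So every map in the sequence commutes with the actions, with trivial action on the left copy of $\Z$. I expect no genuine obstacle here. The only care needed is naturality: the identifications must be the canonical orbit-class maps, so that the induced actions are exactly $\bar w$ and the identity.
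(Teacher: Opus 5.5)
Your proof is correct and follows the same route as the paper. You identify the $g$-coinvariants of $\Z^N$ with $\Z^s$ on the orbit classes, send the diagonal to $(c_1,\ldots,c_s)$, read off $\Z^{s-1}\oplus\Z/e(g)\Z$ from the primitive vector $(c_i/e(g))$ (the paper simply cites Smith normal form), and observe that $w\in C(g)$ induces the orbit permutation $\bar w$, which fixes $(c_1,\ldots,c_s)$. Your appeal to right-exactness of coinvariants and your injectivity check spell out steps the paper leaves implicit.
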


\begin{proof}
The coinvariants of the permutation lattice $\Z^N$ under $g$ are
freely generated by the cycles $O_1,\ldots,O_s$.  In these coordinates
the diagonal vector $(1,\ldots,1)$ maps to
$(c_1,\ldots,c_s)$, which gives
\eqref{eq:coinvariant-exact-sequence}.  Smith normal form gives the
displayed decomposition of $Q_g$.

If $w$ commutes with $g$, it sends a $g$-orbit to a $g$-orbit of the
same length.  Thus $\bar w$ permutes only coordinates having the same
$c_i$.  The restriction of $w$ to an individual $g$-orbit may include a
rotation, but such a rotation is trivial after passing to $g$-coinvariants,
where all points of that orbit represent the same generator.  Hence only
the induced permutation $\bar w$ of the orbit generators remains; it fixes
$(c_1,\ldots,c_s)$ and acts trivially on the source of the first map.
\end{proof}

\begin{lemma}[Triviality on coinvariant torsion]
\label{lem:trivial-torsion-action}
For every $w\in C(g)$, the induced action of $\bar w$ on
$\operatorname{Tor}(Q_g)\cong\Z/e(g)\Z$ is trivial.
\end{lemma}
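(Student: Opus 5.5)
We identify the torsion subgroup explicitly using the exact sequence \eqref{eq:coinvariant-exact-sequence}, and then check that $\bar w$ fixes a generator of it. Write $c=(c_1,\ldots,c_s)$ and $e=e(g)$, so that $Q_g\cong\Z^s/\Z c$. Since $c/e$ is a primitive integer vector, the torsion of $\Z^s/\Z c$ is exactly the saturation quotient
\[
 \operatorname{Tor}(Q_g)=\bigl(\mathbb Q c\cap\Z^s\bigr)/\Z c=\Z(c/e)/\Z c\cong\Z/e\Z .
\]
It is generated by the class of the vector $c/e=(c_1/e,\ldots,c_s/e)$. To justify this, note that an element $v\in\Z^s$ is torsion modulo $\Z c$ iff $mv\in\Z c$ for some $m\geq1$, i.e.\ iff $v\in\mathbb Q c\cap\Z^s$. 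Moreover $\mathbb Q c\cap\Z^s=\Z(c/e)$, because $\gcd(c_i/e)=1$. This matches the $\Z/e(g)\Z$ summand from Smith normal form in Lemma~\ref{lem:weyl-coinvariant-lattice}.

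Next we use the equivariance in Lemma~\ref{lem:weyl-coinvariant-lattice}. The induced action of $w\in C(g)$ on $Q_g$ is the map induced by the coordinate permutation $\bar w$ on $\Z^s$, and $\bar w$ only permutes coordinates $i,j$ with $c_i=c_j$. Hence $\bar w$ fixes $c$, as the lemma already records, and it equally fixes $c/e$, since the coordinates of $c/e$ are $c_i/e$ and are constant on $\bar w$-orbits. So $\bar w$ fixes the generator of $\operatorname{Tor}(Q_g)$. Because it acts by a group automorphism on this cyclic group, it is the identity there.

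The only step needing care, and the main point of the argument, is that the action of $w$ on $Q_g=\Lambda_N/(g-I)\Lambda_N$ really is the one induced from $\bar w$ on $\Z^s$, including the rotations inside individual cycles. This is exactly the content of the equivariance statement in Lemma~\ref{lem:weyl-coinvariant-lattice}, which we take as given. Alternatively, one can argue directly without coordinates. The generator $c/e$ is the image in $Q_g$ of the vector $u\in\Z^N$ whose coordinates equal $1/e$ times\ldots{} However, $u$ need not be integral, so the coordinate description via $\Z^s$ is the cleaner route, and we commit to it.
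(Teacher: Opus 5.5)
Your proof is correct and is essentially the paper's argument: you identify $\operatorname{Tor}(Q_g)$ with the saturation quotient $\Z(c/e)/\Z c$, and you observe that $\bar w$ fixes $c/e$ because it permutes only cycles of equal length. Delete the abandoned coordinate-free aside at the end; it plays no role, and its worry is unfounded, since placing $c_i/e$ at one point of each cycle $O_i$ gives an integral lift.
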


\begin{proof}
Inside $\Z^s$, the saturation of
$\Z(c_1,\ldots,c_s)$ is generated by
$(c_1/e(g),\ldots,c_s/e(g))$.  Hence
\[
 \operatorname{Tor}(Q_g)
 \cong
 \frac{\Z(c_1/e(g),\ldots,c_s/e(g))}
      {\Z(c_1,\ldots,c_s)}.
\]
The permutation $\bar w$ fixes the displayed generator because it
permutes only cycles of equal length.  It therefore acts trivially on
the quotient.
\end{proof}

For commuting $w,g\in S_N$, let
\[
 d(w,g)=(d_1,\ldots,d_\ell)
\]
be the cycle lengths of the permutation induced by $w$ on the set of
cycles of $g$.

\begin{theorem}[General $\mathrm{SL}_N$ empty-skein formula]
\label{thm:general-sln-formula}
Let $N\geq2$ and let $\gamma\in\mathrm{SL}_2(\Z)$ be hyperbolic.  For
every $w\in S_N$ and $g\in C(w)$,
\begin{equation}\label{eq:general-fixed-term}
 \left|\Fix\bigl(g\curvearrowright C_w^{(N)}(\gamma)\bigr)\right|
 =
 \frac{\prod_{j=1}^{\ell}N_{d_j}}{N_1}\,
 V_{e(g)}.
\end{equation}
Consequently,
\begin{equation}\label{eq:general-sln-dimension}
 \dim\HH_0^\gamma\bigl(\SkAlg_{\mathrm{SL}_N}(T^2)\bigr)
 =
 \sum_{[w]\in\operatorname{cl}(S_N)}
 \frac{1}{|C(w)|}
 \sum_{g\in C(w)}
 \frac{\prod_{j=1}^{\ell(w,g)}N_{d_j(w,g)}}{N_1}\,
 V_{e(g)}.
\end{equation}
\end{theorem}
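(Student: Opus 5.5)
The plan is to prove the fixed-point formula \eqref{eq:general-fixed-term} first, and then deduce \eqref{eq:general-sln-dimension} from Burnside's lemma. By Lemma~\ref{lem:phase-free}, the dimension of the $w$-summand in \eqref{eq:kinnear-decomposition} is the number of $C(w)$-orbits on the finite set $C_w^{(N)}(\gamma)$. That number equals $|C(w)|^{-1}\sum_{g\in C(w)}|\Fix(g)|$, and summing over classes $[w]$ gives \eqref{eq:general-sln-dimension} once \eqref{eq:general-fixed-term} is established.

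For \eqref{eq:general-fixed-term}, apply Lemma~\ref{lem:fixed-cokernel} with $L=\Lambda_N\otimes\Z^2$ and $F=I-M_w\otimes\gamma$. Here $F$ is injective with finite cokernel by hyperbolicity, and $g\otimes I$ commutes with $F$ because $g\in C(w)$. The lemma says $|\Fix|$ is the order of the cokernel of the map induced by $F$ on
\[
 L/(g-I)L \cong Q_g\otimes\Z^2 .
\]
By Lemma~\ref{lem:weyl-coinvariant-lattice}, $Q_g\otimes\Z^2$ sits in the exact sequence $0\to\Z^2\to\Z^s\otimes\Z^2\to Q_g\otimes\Z^2\to0$. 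This sequence is equivariant: on the middle term the induced map is $I-P_{\bar w}\otimes\gamma$, and on the left copy $\Z^2$ it is $I-\gamma$, because $\bar w$ fixes $(c_1,\ldots,c_s)$.

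Next, pass to the torsion/free splitting. Set $T=\operatorname{Tor}(Q_g)\otimes\Z^2\cong(\Z/e\Z)^2$ with $e=e(g)$, and let $\bar Q=Q_g/\operatorname{Tor}$ be the free part. Write $\Phi$ for the map induced by $F$. By Lemma~\ref{lem:trivial-torsion-action}, $\Phi$ acts on $T$ as $I-\gamma_e$. On the free part $\bar Q\otimes\Z^2$, the exact sequence $0\to\Z\cdot(c/e)\to\Z^s\to\bar Q\to0$ gives, exactly as in the proof of Lemma~\ref{lem:cycle-determinant},
\[
 |\det(\Phi\mid\bar Q\otimes\Z^2)| = \frac{\prod_j N_{d_j}}{N_1},
\]
where the $d_j$ are the cycle lengths of $\bar w$; the cycle-block determinant computation gives $\prod_j\det(I-\gamma^{d_j})$. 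The map $\Phi$ preserves the torsion subgroup $T$, so there is an exact sequence $0\to T\to Q_g\otimes\Z^2\to\bar Q\otimes\Z^2\to0$ with compatible endomorphisms. The snake lemma then gives an exact sequence of kernels and cokernels. Since $\Phi$ is injective on the free part (its determinant is nonzero), we get
\[
 |\coker\Phi| = |\coker(\Phi\mid T)|\cdot|\coker(\Phi\mid\bar Q\otimes\Z^2)| ,
\]
provided the connecting map is accounted for. Here $\ker(\Phi|_{\bar Q})=0$, so the connecting map $\ker(\Phi|_{\bar Q})\to\coker(\Phi|_T)$ is zero. This gives multiplicativity: $\coker(\Phi|_T)$ has order $V_e$ by \eqref{eq:general-observer-identity}, and $\coker(\Phi|_{\bar Q})$ has order $|\det|=\prod_j N_{d_j}/N_1$.

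The main obstacle is the free-part determinant. We need $\bar Q\otimes\Z^2$ with the induced map $I-P_{\bar w}\otimes\gamma$ to have determinant equal to the $\Z^s$-determinant divided by the $\Z^2$-factor $\det(I-\gamma)$. I expect this to work because on the saturated line spanned by $c/e$, the vector is $\bar w$-fixed, so the restriction is $I-\gamma$; the determinants then multiply over $\mathbb Q$. I also need to check that Lemma~\ref{lem:fixed-cokernel}'s ``induced endomorphism'' is indeed $I-P_{\bar w}\otimes\gamma$. As in Lemma~\ref{lem:weyl-coinvariant-lattice}, the rotations of $w$ within an orbit become trivial in the coinvariants, which gives exactly this map.
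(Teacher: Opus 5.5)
Your argument is correct. It reaches \eqref{eq:general-fixed-term} by a different arrangement of the same lemmas, so here is a brief comparison.

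\textbf{The paper's route.} It applies the snake lemma directly to the tensored presentation $0\to\Z^2\to\Z^s\otimes\Z^2\to Q_g\otimes\Z^2\to0$, which is built on the non-saturated line $\Z(c_1,\ldots,c_s)$. This gives $|\coker F_{w,g}|=|\ker F_{w,g}|\prod_jN_{d_j}/N_1$ in one step. The correction is then read off from the kernel: $\ker F_{w,g}$ is finite, so it lies in the torsion $(\Z/e(g)\Z)^2$. There $\bar w$ acts trivially, so $\ker F_{w,g}=\ker(I-\gamma_{e(g)})$, whose order is $V_{e(g)}$ by the kernel form of \eqref{eq:general-observer-identity}.

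\textbf{Your route.} You filter $Q_g\otimes\Z^2$ by its torsion and work with the saturated line $\Z(c/e)$. The free quotient contributes $|\det|=\prod_jN_{d_j}/N_1$. Your block-triangular computation of this is fine, because $\bar w$ fixes $c/e$. The torsion contributes $|\coker(I-\gamma_e)|$, and injectivity on the free part kills the connecting map.

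\textbf{Trade-off.} The paper's proof is more economical: one snake lemma, no separate determinant computation for the free part, and only finiteness of the kernel is needed to place it in the torsion. Yours costs that extra determinant calculation but gives a more transparent group-level picture. Your snake sequence yields $\ker\Phi\cong\ker(I-\gamma_e)$, where $\Phi$ is the paper's $F_{w,g}$. It also yields a short exact sequence $0\to\coker(I-\gamma_e)\to\coker\Phi\to\coker(\Phi|_{\bar Q\otimes\Z^2})\to0$. This exhibits the mod-$e$ observer group as a subgroup of the fixed-point count's cokernel, with quotient governed by the free part. It thus separates the rank-drop and torsion effects described in Remark~\ref{rem:gln-sln-contrast} at the level of groups rather than only orders.
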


\begin{proof}
Fix $w\in S_N$ and $g\in C(w)$.  Apply
Lemma~\ref{lem:fixed-cokernel} with
\[
 L=\Lambda_N\otimes\Z^2,
 \qquad
 F=I-M_w\otimes\gamma.
\]
The fixed-point count is the order of the cokernel of the map induced
by $F$ on $Q_g\otimes\Z^2$.  On this quotient the induced map is
\[
 F_{w,g}=I-\bar w\otimes\gamma.
\]

Since $\Z^2$ is free, tensoring
\eqref{eq:coinvariant-exact-sequence} with $\Z^2$ preserves exactness.
The maps $I-\gamma$ on the left-hand copy of $\Z^2$,
$I-P_{\bar w}\otimes\gamma$ on $\Z^s\otimes\Z^2$, and $F_{w,g}$ on
$Q_g\otimes\Z^2$ form a morphism of the resulting exact sequence.
Hyperbolicity makes the first two maps injective: the eigenvalues of
$P_{\bar w}$ are roots of unity, whereas neither eigenvalue of $\gamma$
has modulus one.  The snake lemma therefore gives an exact sequence
\[
 0\longrightarrow\ker F_{w,g}
 \longrightarrow\coker(I-\gamma)
 \longrightarrow\coker(I-P_{\bar w}\otimes\gamma)
 \longrightarrow\coker F_{w,g}
 \longrightarrow0.
\]
By the cycle determinant identity,
\[
 \left|\coker(I-P_{\bar w}\otimes\gamma)\right|
 =\prod_{j=1}^{\ell}N_{d_j},
 \qquad
 \left|\coker(I-\gamma)\right|=N_1.
\]
It follows that
\begin{equation}\label{eq:snake-cardinality}
 |\coker F_{w,g}|
 =|\ker F_{w,g}|\,
   \frac{\prod_{j=1}^{\ell}N_{d_j}}{N_1}.
\end{equation}

The same exact sequence shows that $\ker F_{w,g}$ is finite, hence it
lies in the torsion subgroup of $Q_g\otimes\Z^2$.  By
Lemmas~\ref{lem:weyl-coinvariant-lattice} and
\ref{lem:trivial-torsion-action}, this subgroup is
$(\Z/e(g)\Z)^2$ and $\bar w$ acts trivially on it.  Therefore
\[
 |\ker F_{w,g}|
 =\left|\ker\bigl(I-\gamma_{e(g)}:
       (\Z/e(g)\Z)^2\to(\Z/e(g)\Z)^2\bigr)\right|
 =V_{e(g)}
\]
by \eqref{eq:general-observer-identity}.  Substitution in
\eqref{eq:snake-cardinality} proves
\eqref{eq:general-fixed-term}.  Finally, Burnside's lemma applied to
each summand of \eqref{eq:kinnear-decomposition} gives
\eqref{eq:general-sln-dimension}.
\end{proof}

\begin{remark}[Rank-three consistency check]
For $N=3$, the five types of terms in
\eqref{eq:general-sln-dimension} combine to give
\[
 \dim\HH_0^\gamma\bigl(\SkAlg_{\mathrm{SL}_3}(T^2)\bigr)
 =\frac{N_1^2}{6}+N_1+\frac{N_2}{2}
  +\frac{N_3}{3N_1}+V_3.
\]
This is exactly the formula obtained by the sector-by-sector calculation
in Theorem~\ref{thm:main-formula} below.  Thus the first nontrivial rank is
both a specialization of the general theorem and an independent check on
its commuting-pair bookkeeping.
\end{remark}

\begin{corollary}[Data dependence at rank $N$]
\label{cor:data-dependence}
For hyperbolic $\gamma$, the rank-$N$ empty-skein summand is determined
by the periodic Nielsen numbers $N_1,\ldots,N_N$ together with the
visible Nielsen numbers $V_{e\Z^2}(f_\gamma)$ for $e\mid N$.
\end{corollary}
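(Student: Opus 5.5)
The plan is to read the corollary directly off formula \eqref{eq:general-sln-dimension} of Theorem~\ref{thm:general-sln-formula}. That formula writes the dimension as a finite sum over conjugacy classes $[w]$ and elements $g\in C(w)$. The combinatorial coefficients $1/|C(w)|$ depend only on $N$. The only $\gamma$-dependent quantities are the numbers $N_{d_j(w,g)}$, the divisor $N_1$ (nonzero by hyperbolicity), and $V_{e(g)}$. So it is enough to prove two facts: every index $d_j(w,g)$ lies in $\{1,\ldots,N\}$, and every $e(g)$ divides $N$.

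\textbf{The indices $d_j$.} By definition, $d(w,g)$ is the list of cycle lengths of the permutation $\bar w$ induced by $w$ on the set $\{O_1,\ldots,O_s\}$ of cycles of $g$. A cycle of $\bar w$ has length at most the number of points it permutes, so $d_j\le s$. Since each $O_i$ is nonempty, $s\le N$, giving $1\le d_j\le N$. Hence each $N_{d_j}$ is among $N_1,\ldots,N_N$.

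\textbf{The moduli $e(g)$.} By definition $e(g)=\gcd(c_1,\ldots,c_s)$, where the cycle lengths satisfy $c_1+\cdots+c_s=N$. Since $e(g)$ divides every $c_i$, it divides their sum, so $e(g)\mid N$. Hence each $V_{e(g)}$ is one of the canonical observer values $V_{e\Z^2}(f_\gamma)$ with $e\mid N$.

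\textbf{Conclusion.} Substituting these two observations into \eqref{eq:general-sln-dimension} expresses the dimension as a fixed rational function, depending only on $N$, of $N_1,\ldots,N_N$ and $\{V_e\}_{e\mid N}$. I expect no real obstacle here. The only point needing care is that the Burnside count, via Lemma~\ref{lem:phase-free}, computes the full dimension of the summand and not just an orbit count in some auxiliary set; that is already built into the theorem. It may also be worth noting that $e=1$ contributes $V_1=1$ and that $e=N$ occurs only for $g=\mathrm{id}$ or an $N$-cycle, though neither remark is needed.
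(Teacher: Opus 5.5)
Your proof is correct and follows the paper's own argument: you read the corollary off \eqref{eq:general-sln-dimension} using $d_j(w,g)\le s\le N$ and $e(g)\mid c_1+\cdots+c_s=N$. One slip in your closing aside, which does not affect the proof: $e(\mathrm{id})=\gcd(1,\ldots,1)=1$, so for $N\ge 2$ the modulus $e=N$ occurs only when $g$ is an $N$-cycle, not when $g=\mathrm{id}$.
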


\begin{proof}
Every $d_j(w,g)$ is at most $N$.  Moreover, $e(g)$ divides every cycle
length of $g$, hence it divides their sum $N$.  These are precisely the
data appearing in \eqref{eq:general-sln-dimension}.
\end{proof}

\begin{proposition}[No torsion corrections for $\mathrm{GL}_N$]
\label{prop:gln-freeness}
Let $\gamma\in\mathrm{SL}_2(\Z)$ be hyperbolic and let $N\geq1$.  For
every $w\in S_N$ and $g\in C(w)$,
\[
 \left|\Fix\Bigl(g\curvearrowright
 \coker\bigl(I-P_w\otimes\gamma:
 \Z^N\otimes\Z^2\to\Z^N\otimes\Z^2\bigr)\Bigr)\right|
 =\prod_{j=1}^{\ell(w,g)}N_{d_j(w,g)},
\]
and consequently
\[
 \dim\Sk_{\mathrm{GL}_N}(M_\gamma)
 =\sum_{[w]\in\operatorname{cl}(S_N)}\frac1{|C(w)|}
  \sum_{g\in C(w)}\ \prod_{j=1}^{\ell(w,g)}N_{d_j(w,g)}.
\]
In particular the $\mathrm{GL}_N$-skein dimension is determined by
$N_1,\ldots,N_N$ alone, hence by $\tr(\gamma)$
(Remark~\ref{rem:nielsen-trace}), and no visible-Nielsen correction
occurs: the coinvariant lattice $\Z^N/(g-I)\Z^N$ is free, so the
torsion mechanism of Lemmas~\ref{lem:weyl-coinvariant-lattice}
and~\ref{lem:trivial-torsion-action} is absent.
\end{proposition}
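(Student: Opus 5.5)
The plan is to rerun the proof of Theorem~\ref{thm:general-sln-formula} with $\Lambda_N$ replaced by the permutation lattice $\Z^N$. The difference is that the torsion term vanishes from the start, so no snake lemma is needed. Fix $w\in S_N$ and $g\in C(w)$. Apply Lemma~\ref{lem:fixed-cokernel} with $L=\Z^N\otimes\Z^2$ and $F=I-P_w\otimes\gamma$. This $F$ is injective with finite cokernel by hyperbolicity, as in Lemma~\ref{lem:cycle-determinant}: its eigenvalues are $1-\zeta\lambda^{\pm1}$ with $|\lambda|\neq1$. Moreover $g\otimes I$ commutes with $F$. Hence the fixed-point count equals the order of the cokernel of the map induced by $F$ on $(\Z^N/(g-I)\Z^N)\otimes\Z^2$.

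Next I identify this quotient. As in the first step of the proof of Lemma~\ref{lem:weyl-coinvariant-lattice}, the $g$-coinvariants of $\Z^N$ are freely generated by the cycles $O_1,\ldots,O_s$ of $g$, so $\Z^N/(g-I)\Z^N\cong\Z^s$ is free. Since $w$ commutes with $g$, it permutes the cycles of $g$. Rotations within a cycle become trivial in the coinvariants, so $P_w$ descends to the permutation matrix $P_{\bar w}$ on $\Z^s$. The induced map is therefore $I-P_{\bar w}\otimes\gamma$ on $\Z^s\otimes\Z^2$. Its determinant is nonzero, and applying the cycle-determinant computation in the first half of the proof of Lemma~\ref{lem:cycle-determinant} to $\bar w$, whose cycle lengths are $d_1,\ldots,d_\ell$, gives cokernel order $\prod_j N_{d_j}$. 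This proves the fixed-point identity.

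For the dimension formula I need the $\mathrm{GL}_N$ analogue of \eqref{eq:kinnear-decomposition} together with Lemma~\ref{lem:phase-free}. The skein module of $M_\gamma$ for $\mathrm{GL}_N$ is the whole skein-algebra term (Kinnear, \cite{BierentJordanVancraeynestVazirani}). It decomposes over conjugacy classes $[w]$ as $C(w)$-coinvariants of $\K[\coker(I-P_w\otimes\gamma)]$. The phase-freeness argument transfers verbatim, since it used only that $g$ commutes with $A_w$ and preserves $\Omega$. So each summand has dimension equal to the number of orbits, and Burnside's lemma together with the fixed-point identity gives the stated sum.

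The main obstacle is this transfer of the decomposition and phase-freeness to $\mathrm{GL}_N$. Here I would cite \cite{BierentJordanVancraeynestVazirani} directly, as their partition function is exactly this orbit count. The remaining claims are then immediate. Every $d_j\le N$, so only $N_1,\ldots,N_N$ appear. Each $N_k=|2-\tr(\gamma^k)|$ is a function of $\tr(\gamma)$, by Remark~\ref{rem:nielsen-trace}. Finally, since $\Z^s$ is torsion-free, no kernel term of the kind $V_{e(g)}$ that arose in Theorem~\ref{thm:general-sln-formula} can occur.
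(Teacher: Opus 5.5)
Your proposal is correct and follows essentially the same route as the paper. It applies Lemma~\ref{lem:fixed-cokernel} to $F=I-P_w\otimes\gamma$ on $\Z^N\otimes\Z^2$, passes to the free coinvariant lattice $\Z^s$ with induced map $I-P_{\bar w}\otimes\gamma$, counts its cokernel by the cycle-determinant identity, and then uses Kinnear's whole-module $\mathrm{GL}_N$ decomposition (the paper cites \cite[Corollary~3.6]{Kinnear} here rather than leaning on \cite{BierentJordanVancraeynestVazirani}) together with Lemma~\ref{lem:phase-free} and Burnside's lemma; the only other difference is the order of presentation.
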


\begin{proof}
For $G=\mathrm{GL}_N$ the character lattice is the permutation lattice
$\Z^N$, the Weyl group is $S_N$, and Kinnear's decomposition gives the
whole skein module rather than a direct summand
\cite[Corollary~3.6]{Kinnear}:
\[
 \Sk_{\mathrm{GL}_N}(M_\gamma)
 \cong
 \bigoplus_{[w]\in\operatorname{cl}(S_N)}
 \K\bigl[\coker(I-P_w\otimes\gamma)\bigr]_{C(w)}.
\]
Lemma~\ref{lem:phase-free} applies verbatim with $\Lambda_N$ replaced
by $\Z^N$, so each summand has dimension equal to the number of
$C(w)$-orbits, and Burnside's lemma reduces the claim to the displayed
fixed-point count.

Fix $w$ and $g\in C(w)$, and apply Lemma~\ref{lem:fixed-cokernel} with
$L=\Z^N\otimes\Z^2$ and $F=I-P_w\otimes\gamma$.  The $g$-coinvariant
lattice $\Z^N/(g-I)\Z^N$ is free of rank $s$, with basis given by the
common class of the coordinate vectors along each cycle of $g$, as in
the first paragraph of the proof of
Lemma~\ref{lem:weyl-coinvariant-lattice}, and the map induced by $F$ on
$\Z^s\otimes\Z^2$ is $I-P_{\bar w}\otimes\gamma$, where $\bar w$ is the
permutation induced by $w$ on the cycles of $g$.  This map is
injective, since the eigenvalues of $P_{\bar w}$ are roots of unity
while neither eigenvalue of $\gamma$ has modulus one, and the order of
its cokernel is
$\prod_j\lvert\det(I-\gamma^{d_j})\rvert=\prod_jN_{d_j}$ by the
cycle-determinant computation on the free lattice, that is, by the
first display in the proof of Lemma~\ref{lem:cycle-determinant}.  By
Lemma~\ref{lem:fixed-cokernel} this cokernel order is the fixed-point
count, which proves the first formula; no kernel term appears because
the coinvariant lattice is torsion-free and the induced map injective.
The final assertions follow since every $d_j(w,g)$ is at most $N$.
\end{proof}

\begin{remark}[The $\mathrm{GL}_N/\mathrm{SL}_N$ contrast]
\label{rem:gln-sln-contrast}
For hyperbolic monodromy, Proposition~\ref{prop:gln-freeness} recovers
the trace-dependence of the $\mathrm{GL}_N$-skein dimensions
established in \cite{BierentJordanVancraeynestVazirani}.  Passing from
the permutation lattice $\Z^N$ to the $\mathrm{SL}_N$ character
lattice $\Lambda_N$ has two effects on the coinvariant lattices of
Lemma~\ref{lem:weyl-coinvariant-lattice}.  First, the rank drops by
one; this produces the division by $N_1$ in
\eqref{eq:general-fixed-term} and is still determined by the periodic
Nielsen data.  Second, the torsion $\Z/e(g)\Z$ appears; this carries
the observer correction $V_{e(g)}$.  Only the second effect goes
beyond the periodic Nielsen numbers: the part of the difference
between the $\mathrm{GL}_N$ spectrum and the $\mathrm{SL}_N$
quantum-torus summand that is invisible to the trace is precisely this
coinvariant torsion.
\end{remark}

\begin{remark}[Diagonal and off-diagonal terms]
If $g=w$ has $s$ cycles, then $w$ induces the identity permutation on
its own cycles.  Thus $d(w,w)=(1,\ldots,1)$ and
\eqref{eq:general-fixed-term} reduces to
\[
 \left|\Fix\bigl(w\curvearrowright C_w^{(N)}(\gamma)\bigr)\right|
 =N_1^{s-1}V_{e(w)}.
\]
This diagonal term is only one source of the visible-Nielsen corrections;
off-diagonal commuting pairs can contribute the same observer value.  For
$N=3$, writing $\sigma=(123)$, the diagonal pair
$(w,g)=(\sigma,\sigma)$ contributes $T_3(\gamma)/3$ after Burnside
averaging.  The pair $(\sigma,\sigma^2)$ contributes another
$T_3(\gamma)/3$, while $(e,\sigma)$ and $(e,\sigma^2)$ together
contribute the remaining $T_3(\gamma)/3$.  Thus only one third of the
total $T_3(\gamma)$ correction is diagonal.  In general the observer
value in \eqref{eq:general-fixed-term} is determined by the acting element
$g$ through $e(g)$, not by the condition $g=w$.
\end{remark}

\begin{example}[The rank-four formula]\label{ex:sl4-formula}
Writing $V_e=V_{e\Z^2}(f_\gamma)$, the following table records the entire
$[w]$-sector after averaging the fixed-point terms over $C(w)$.  It makes
the specialization of Theorem~\ref{thm:general-sln-formula} at $N=4$
explicit.

\begin{center}
\small
\renewcommand{\arraystretch}{1.35}
\begin{tabular}{@{}cl@{}}
\toprule
cycle type of $w$ & contribution of the $[w]$-sector \\
\midrule
$1^4$
& $\displaystyle \frac{N_1^3}{24}+\frac{N_1^2}{4}
   +\frac{N_1V_2}{8}+\frac{N_1}{3}+\frac{V_4}{4}$ \\
$2\,1^2$
& $\displaystyle \frac{N_1^2}{4}+\frac{N_1N_2}{4}
   +\frac{N_1V_2}{4}+\frac{N_2}{4}$ \\
$2^2$
& $\displaystyle \frac{N_2^2}{8N_1}+\frac{N_2}{4}
   +\frac{N_1V_2}{8}+\frac{N_2V_2}{4N_1}+\frac{V_4}{4}$ \\
$3\,1$
& $\displaystyle \frac{2N_1}{3}+\frac{N_3}{3}$ \\
$4$
& $\displaystyle \frac{N_4}{4N_1}
   +\frac{N_2V_2}{4N_1}+\frac{V_4}{2}$ \\
\bottomrule
\end{tabular}
\end{center}

For example, if $w$ is a $4$-cycle, then
$C(w)=\{e,w,w^2,w^3\}$.  The four fixed-point terms in that sector are
\[
 \frac{N_4}{N_1},\qquad V_4,\qquad
 \frac{N_2}{N_1}V_2,\qquad V_4,
\]
whose average is the last row of the table.  The other rows follow in the
same way from their centralizers.  Summing all five rows gives
\begin{align*}
 \dim\HH_0^\gamma\bigl(\SkAlg_{\mathrm{SL}_4}(T^2)\bigr)
 ={}&\frac{N_1^3}{24}+\frac{N_1^2}{2}
 +\frac{N_1N_2}{4}+\frac{N_1V_2}{2}
 +N_1+\frac{N_2}{2}+\frac{N_3}{3}+V_4\\
 &+\frac{N_2^2}{8N_1}+\frac{N_2V_2}{2N_1}
 +\frac{N_4}{4N_1}.
\end{align*}
Only the canonical mod-$2$ and mod-$4$ observers occur, as predicted by
Corollary~\ref{cor:data-dependence}.
For the companion matrices
$\gamma_t=\left(\begin{smallmatrix}0&-1\\1&t\end{smallmatrix}\right)$,
the values at $t=3,-3,5,-5,6$ are, respectively,
$29,45,139,163,248$.
\end{example}

\section{The \texorpdfstring{$\mathrm{SL}_3$}{SL3} orbit calculation}

In this section write $\Lambda=\Lambda_3$ and
$C_w(\gamma)=C_w^{(3)}(\gamma)$.  The three conjugacy classes of $S_3$
are represented by
\[
 e,\qquad \tau=(12),\qquad \sigma=(123).
\]
With respect to the basis $\bar e_1,\bar e_2$ of $\Lambda$, one may take
\[
 M_\tau=
 \begin{pmatrix}0&1\\1&0\end{pmatrix},
 \qquad
 M_\sigma=
 \begin{pmatrix}0&-1\\1&-1\end{pmatrix}.
\]
Their centralizers have orders $6$, $2$, and $3$, respectively.

Set
\[
 C(\gamma):=\coker(I-\gamma:\Z^2\to\Z^2),
 \qquad
 T_3(\gamma):=
 \left|\ker\bigl(3:C(\gamma)\to C(\gamma)\bigr)\right|.
\]
The next proposition gives the three terms in
\eqref{eq:kinnear-decomposition}.

\begin{proposition}\label{prop:three-sectors}
The orbit counts for the identity, transposition, and $3$-cycle sectors
are, respectively,
\begin{align}
 d_e(\gamma)
 &=\frac{N_1^2+3N_1+2T_3(\gamma)}{6},
 \label{eq:identity-sector}\\
 d_\tau(\gamma)
 &=\frac{N_2+N_1}{2},
 \label{eq:transposition-sector}\\
 d_\sigma(\gamma)
 &=\frac{N_3/N_1+2T_3(\gamma)}{3}.
 \label{eq:threecycle-sector}
\end{align}
\end{proposition}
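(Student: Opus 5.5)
The plan is to specialise Theorem~\ref{thm:general-sln-formula} sector by sector. By Lemma~\ref{lem:phase-free}, each sector dimension $d_w(\gamma)$ is the number of $C(w)$-orbits on $C_w(\gamma)$. Burnside's lemma then gives $d_w=\frac{1}{|C(w)|}\sum_{g\in C(w)}|\Fix(g)|$, and each fixed-point count is read off from \eqref{eq:general-fixed-term}. Two ingredients are needed for every commuting pair $(w,g)$: the cycle type $d(w,g)$ of the permutation $\bar w$ that $w$ induces on the cycles of $g$, and $e(g)$. Finally, $V_{e(g)}$ is converted to the notation of the statement. We have $V_1=1$. For $V_3$, the identity \eqref{eq:general-observer-identity} gives $V_3=|\ker(I-\gamma_3)|=|C(\gamma)\otimes\Z/3|$. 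Since $C(\gamma)$ is finite, this equals $|C(\gamma)/3C(\gamma)|=|\ker(3\colon C(\gamma)\to C(\gamma))|=T_3(\gamma)$. This is the same identification as Proposition~\ref{prop:mod3-observer}, and it is essentially the finite-group fact already used in Lemma~\ref{lem:fixed-cokernel}.

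\emph{Identity sector} ($w=e$, $C(w)=S_3$). For $g=e$, the cycles of $g$ are three fixed points, $\bar w$ is trivial, $d=(1,1,1)$ and $e(g)=1$, so the term is $N_1^3/N_1=N_1^2$. For the three transpositions $g$, there are two cycles of lengths $2$ and $1$, so $d=(1,1)$ and $e(g)=1$, giving $N_1$ each. For the two $3$-cycles $g$, there is one cycle, so $d=(1)$ and $e(g)=3$, giving $V_3=T_3$ each. Averaging yields $(N_1^2+3N_1+2T_3)/6$.

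\emph{Transposition sector} ($w=\tau$, $C(\tau)=\{e,\tau\}$). For $g=e$, $\bar\tau$ swaps two of the three singleton cycles, so $d=(2,1)$ and the term is $N_2N_1/N_1=N_2$. For $g=\tau$, $\bar\tau$ fixes both cycles $\{1,2\}$ and $\{3\}$, so $d=(1,1)$, $e=1$, and the term is $N_1$. The average is $(N_2+N_1)/2$.

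\emph{3-cycle sector} ($w=\sigma$, $C(\sigma)=\{e,\sigma,\sigma^2\}$). For $g=e$, $\bar\sigma$ is a $3$-cycle on the three singletons, so $d=(3)$ and the term is $N_3/N_1$. For $g=\sigma^{\pm1}$ there is a single cycle, so $d=(1)$ and $e=3$, giving $V_3=T_3$ each. The average is $(N_3/N_1+2T_3)/3$.

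The main obstacle is the bookkeeping: correctly identifying $\bar w$ on the cycles of $g$ in each pair, and in particular not confusing $e(g)$ with $e(w)$. A secondary point is to confirm $V_3=T_3(\gamma)$ cleanly, via the kernel/cokernel equality for endomorphisms of finite groups. If an independent check is desired, the $g=e$ terms can be verified directly from Lemma~\ref{lem:cycle-determinant}, and the $\sigma$-fixed terms from Lemma~\ref{lem:fixed-cokernel} using $Q_\sigma\cong\Z/3$.
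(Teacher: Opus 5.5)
Your proof is correct: all of the commuting-pair data $d(w,g)$ and $e(g)$ are right, and so are the three averages. Your identification $V_3=T_3(\gamma)$ is also valid. Implicitly it passes through $\coker(I-\gamma_3)\cong C(\gamma)/3C(\gamma)$, and then applies the kernel/cokernel equality twice.

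Your route differs from the paper's. The paper does not invoke Theorem~\ref{thm:general-sln-formula} for this proposition. Instead it computes each fixed-point count directly from Lemma~\ref{lem:fixed-cokernel}:
\begin{itemize}
\item It uses the explicit matrices $M_\tau,M_\sigma$ and Smith normal form to get $\Lambda/(g-I)\Lambda\cong\Z$ for transpositions and $\Lambda/(g-I)\Lambda\cong\Z/3\Z$ for $3$-cycles.
\item It checks by hand that $w$ acts trivially on that quotient.
\item It identifies $|\ker(I-\bar\gamma)|$ on $\F_3^2$ with $T_3(\gamma)$ via the snake lemma for multiplication by $3$. This gives an actual isomorphism $\ker(3\colon C(\gamma)\to C(\gamma))\cong\ker(I-\bar\gamma)$, whereas your argument gives only equality of orders, which is all that is needed here.
\end{itemize}

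Your specialization is shorter and reduces the rank-three case to pure cycle-type bookkeeping. However, it inherits its validity from the general theorem, namely Lemmas~\ref{lem:weyl-coinvariant-lattice} and~\ref{lem:trivial-torsion-action} and the snake-lemma cardinality count. The paper's direct calculation avoids that machinery, using only Lemmas~\ref{lem:cycle-determinant} and~\ref{lem:fixed-cokernel}. That independence is exactly what lets the paper use it, in the rank-three consistency remark following Theorem~\ref{thm:general-sln-formula}, as an independent check on the general formula's commuting-pair bookkeeping. Your version cannot play that role.
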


\begin{proof}
We repeatedly apply Burnside's lemma and
Lemma~\ref{lem:fixed-cokernel}.

\smallskip
\noindent\emph{Identity sector.}
Here $C(e)=S_3$.  By Lemma~\ref{lem:cycle-determinant}, the identity
element fixes all $N_1^2$ elements of $C_e(\gamma)$.  For a transposition
$g$, the Smith normal form of $M_g-I$ on $\Lambda$ is
$\operatorname{diag}(1,0)$, so
\[
 \Lambda/(g-I)\Lambda\cong\Z.
\]
The map $I-I_\Lambda\otimes\gamma$ induced on the quotient is
$I-\gamma$, and Lemma~\ref{lem:fixed-cokernel} gives $N_1$ fixed
points.  There are three transpositions.

For a $3$-cycle $g$, the matrix $M_g-I$ has determinant $3$, and
\[
 \Lambda/(g-I)\Lambda\cong\Z/3\Z.
\]
The induced difference map is $I-\bar\gamma$ on
$(\Z/3\Z)^2$.  Its cokernel and kernel have the same order.  This order
is $T_3(\gamma)$: indeed, applying the snake lemma to multiplication by
$3$ and the injective map $I-\gamma$ gives
\[
 \ker\bigl(3:C(\gamma)\to C(\gamma)\bigr)
 \cong\ker(I-\bar\gamma:\F_3^2\to\F_3^2).
\]
There are two $3$-cycles.  Burnside's lemma now yields
\eqref{eq:identity-sector}.

\smallskip
\noindent\emph{Transposition sector.}
The centralizer of $\tau$ is $\{e,\tau\}$.  The identity fixes all
$|C_\tau(\gamma)|=N_2$ elements.  Since
$\Lambda/(\tau-I)\Lambda\cong\Z$ and $M_\tau$ becomes the identity on
this quotient, the map $I-M_\tau\otimes\gamma$ descends to
$I-\gamma$.  Thus $\tau$ has $N_1$ fixed points, proving
\eqref{eq:transposition-sector}.

\smallskip
\noindent\emph{$3$-cycle sector.}
The centralizer of $\sigma$ is
$\{e,\sigma,\sigma^2\}$.  The identity fixes
$|C_\sigma(\gamma)|=N_3/N_1$ elements.  For
$g=\sigma$ or $\sigma^2$, the quotient
$\Lambda/(g-I)\Lambda$ is $\Z/3\Z$; moreover, $M_\sigma$ acts as the
identity on this quotient.  In both cases the descended difference map
is $I-\bar\gamma$ on $\F_3^2$, so the fixed-point count is
$T_3(\gamma)$.  This proves \eqref{eq:threecycle-sector}.
\end{proof}

\begin{theorem}[Explicit quantum-torus formula]
\label{thm:main-formula}
Let $\gamma\in\mathrm{SL}_2(\Z)$ be hyperbolic.  Then
\begin{align*}
 \dim \HH_0^\gamma\bigl(\SkAlg_{\mathrm{SL}_3}(T^2)\bigr)
 &=
 \frac{N_1^2+3N_1+2T_3(\gamma)}{6}
 +\frac{N_2+N_1}{2}
 +\frac{N_3/N_1+2T_3(\gamma)}{3}\\
 &=
 \frac{N_1^2+6N_1+3N_2+2N_3/N_1}{6}
 +T_3(\gamma)\\
 &=\frac{N_1^2}{6}+N_1+\frac{N_2}{2}
   +\frac{N_3}{3N_1}+V_{3\Z^2}(f_\gamma).
\end{align*}
\end{theorem}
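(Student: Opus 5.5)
The plan is to assemble the three lines of Theorem~\ref{thm:main-formula} from results already established. For the first equality I would invoke the decomposition \eqref{eq:kinnear-decomposition} for $N=3$. The conjugacy classes of $S_3$ are represented by $e,\tau,\sigma$. By Lemma~\ref{lem:phase-free}, each summand $\K[C_w(\gamma)]_{C(w)}$ has dimension equal to the number of $C(w)$-orbits on $C_w(\gamma)$. Proposition~\ref{prop:three-sectors} computes these orbit counts as $d_e,d_\tau,d_\sigma$, so their sum is the first displayed line.

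The second equality is arithmetic over the common denominator $6$. The free parts combine as
\[
N_1^2+3N_1+3N_2+3N_1+2N_3/N_1=N_1^2+6N_1+3N_2+2N_3/N_1.
\]
The torsion terms give $2T_3/6+2T_3/3=T_3/3+2T_3/3=T_3(\gamma)$. Expanding the fraction then yields the termwise form $N_1^2/6+N_1+N_2/2+N_3/(3N_1)$.

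The third line requires $T_3(\gamma)=V_{3\Z^2}(f_\gamma)$, and I expect this identification to be the only step needing care. It follows from two facts. In the proof of Proposition~\ref{prop:three-sectors}, the snake lemma applied to multiplication by $3$ and the injective map $I-\gamma$ gives $\ker(3:C(\gamma)\to C(\gamma))\cong\ker(I-\gamma_3)$ on $\F_3^2$. The identity \eqref{eq:general-observer-identity} with $m=3$ says that $V_3$ equals the order of this kernel. These two facts combine to give the claim, which is also the content of Proposition~\ref{prop:mod3-observer} cited in the introduction.

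As a check, the result agrees with the specialization of \eqref{eq:general-sln-dimension} recorded in the rank-three remark. There the $V_{e(g)}$ terms with $e(g)=3$ come only from the $3$-cycles $g$, and they total $V_3$.
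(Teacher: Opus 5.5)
Your proposal is correct and follows essentially the same route as the paper: sum the orbit counts $d_e,d_\tau,d_\sigma$ of Proposition~\ref{prop:three-sectors} over the decomposition \eqref{eq:kinnear-decomposition}, rearrange over the common denominator $6$, and identify $T_3(\gamma)=V_{3\Z^2}(f_\gamma)$ using the snake-lemma isomorphism $\ker(3:C(\gamma)\to C(\gamma))\cong\ker(I-\bar\gamma)$ together with \eqref{eq:general-observer-identity}. Your arithmetic and your consistency check against the general formula \eqref{eq:general-sln-dimension} are also correct.
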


\begin{proof}
Sum the three orbit counts in Proposition~\ref{prop:three-sectors}, as
prescribed by \eqref{eq:kinnear-decomposition}.  The second expression
is a rearrangement of the first.  The final equality follows from
\eqref{eq:general-observer-identity}, since
$T_3(\gamma)=V_{3\Z^2}(f_\gamma)$.
\end{proof}

\begin{corollary}[Trace form]\label{cor:trace-form}
Let $t=\tr(\gamma)$, with $|t|>2$.  Then
\[
 \dim \HH_0^\gamma\bigl(\SkAlg_{\mathrm{SL}_3}(T^2)\bigr)
 =
 \begin{cases}
  t^2+t-3+T_3(\gamma),&t>2,\\[2mm]
  t^2-t+1+T_3(\gamma),&t<-2.
 \end{cases}
\]
\end{corollary}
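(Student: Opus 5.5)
The plan is to substitute explicit trace formulas for $N_1$, $N_2$ and $N_3/N_1$ into the second line of Theorem~\ref{thm:main-formula},
\[
 \dim \HH_0^\gamma\bigl(\SkAlg_{\mathrm{SL}_3}(T^2)\bigr)
 =\frac{N_1^2+6N_1+3N_2+2N_3/N_1}{6}+T_3(\gamma),
\]
and simplify the rational part separately for $t>2$ and $t<-2$. The term $T_3(\gamma)$ is carried along unchanged.

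First I would express each $N_k$ through $t$. For $A\in\mathrm{SL}_2(\Z)$ one has $\det(I-A)=2-\tr(A)$. Cayley--Hamilton gives $\tr(\gamma^2)=t^2-2$ and $\tr(\gamma^3)=t^3-3t$. Hence
\[
 N_1=|2-t|,\qquad N_2=|4-t^2|=t^2-4,\qquad N_3=|2+3t-t^3|.
\]
The key factorization is $2+3t-t^3=-(t-2)(t+1)^2$. This gives $N_3=|t-2|\,(t+1)^2$, and therefore
\[
 \frac{N_3}{N_1}=(t+1)^2.
\]
In particular $N_3/N_1$ is an integer polynomial in $t$ with no sign case distinction.

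Next I would split according to the sign of $t-2$. Only $N_1$ depends on the sign, while $N_1^2=(t-2)^2$ does not.

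\emph{Case $t>2$.} Here $N_1=t-2$, and the numerator becomes
\[
 (t-2)^2+6(t-2)+3(t^2-4)+2(t+1)^2=6t^2+6t-18,
\]
so the rational part equals $t^2+t-3$.

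\emph{Case $t<-2$.} Here $N_1=2-t$, and the numerator becomes
\[
 (t-2)^2+6(2-t)+3(t^2-4)+2(t+1)^2=6t^2-6t+6,
\]
so the rational part equals $t^2-t+1$.

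Adding $T_3(\gamma)$ back in each case gives the two displayed formulas.

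Everything here is elementary. The only step needing care is obtaining $N_3/N_1$ cleanly, through the cubic factorization and the correct handling of absolute values. A sign slip there would break the integrality of the final expressions, so I would check the factorization by evaluating at a sample value, for instance $t=3$, where $2+9-27=-16=-(1)(16)$.
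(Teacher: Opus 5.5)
Your proposal is correct and matches the paper's proof: express $N_1=|2-t|$, $N_2=t^2-4$ and $N_3/N_1=(t+1)^2$ through the trace recurrence, substitute into Theorem~\ref{thm:main-formula}, and split into the cases $t>2$ and $t<-2$. Your factorization $2+3t-t^3=-(t-2)(t+1)^2$ and both numerator computations are right; for instance, $t=-7$ gives $57$, which agrees with the paper's example.
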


\begin{proof}
The trace recurrence for powers of an $\mathrm{SL}_2$ matrix gives
\[
 N_1=|2-t|,\qquad N_2=t^2-4,\qquad
 \frac{N_3}{N_1}=(t+1)^2.
\]
Substitution in Theorem~\ref{thm:main-formula}, followed by separating
the cases $t>2$ and $t<-2$, gives the two expressions.
\end{proof}

\begin{remark}\label{rem:scope}
Theorems~\ref{thm:general-sln-formula} and \ref{thm:main-formula}
compute a canonical direct summand of the corresponding
$\mathrm{SL}_N$-skein module, not the full module.  In addition to the
skein-algebra term, Kinnear's $\mathrm{SL}_N$ decomposition contains
twisted Hochschild homologies of endomorphism algebras associated with
marked points; see \cite[Corollary~3.6 and Question~1.8]{Kinnear}.  No
claim about those additional summands is made here.
\end{remark}

\section{The finite-observer interpretation and the precise
mod-\texorpdfstring{$3$}{3} correction}

The general formula isolates a canonical finite observer for every divisor
of the rank.  We now specialize this mechanism to rank three and identify
its mod-$3$ correction topologically.  The subgroup $3\Z^2$ is invariant
under every integral $\gamma$ and determines the canonical degree-$9$
mod-$3$ cover.

\begin{proposition}[Mod-$3$ observer identity]
\label{prop:mod3-observer}
Let $\gamma\in\mathrm{SL}_2(\Z)$ with $\det(I-\gamma)\neq0$.  Then
\begin{align*}
 V_{3\Z^2}(f_\gamma)
 &=|\coker(I-\bar\gamma:\F_3^2\to\F_3^2)|
  =|\ker(I-\bar\gamma:\F_3^2\to\F_3^2)|\\
 &=T_3(\gamma)
  =\left|\ker\bigl(3:\operatorname{Tor}H_1(M_\gamma;\Z)
      \to\operatorname{Tor}H_1(M_\gamma;\Z)\bigr)\right|.
\end{align*}
Consequently, Theorem~\ref{thm:main-formula} can be written entirely in
finite-observer language as
\[
 \dim \HH_0^\gamma\bigl(\SkAlg_{\mathrm{SL}_3}(T^2)\bigr)
 =
 \frac{N_1^2+6N_1+3N_2+2N_3/N_1}{6}
+V_{3\Z^2}(f_\gamma).
\]
\end{proposition}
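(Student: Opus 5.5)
The plan is to prove the chain of equalities link by link and then substitute into Theorem~\ref{thm:main-formula}. The first two equalities are the case $m=3$ of \eqref{eq:general-observer-identity}. The only input needed there is that $\det(I-\gamma)\neq0$ makes the fixed points of $f_\gamma$ isolated, each with local index $\operatorname{sign}\det(I-\gamma)$. This common index is what ensures that pushing the Reidemeister trace to $\Z^2/3\Z^2$ involves no cancellation, so $V_{3\Z^2}(f_\gamma)$ is the number of classes in $\coker(I-\bar\gamma)$ on $\F_3^2$. The kernel and cokernel of any endomorphism of a finite group have equal order, which gives the second equality.

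For $T_3(\gamma)=|\ker(I-\bar\gamma)|$, I would reuse the snake-lemma argument already given in the identity-sector part of Proposition~\ref{prop:three-sectors}. Consider the short exact sequence $0\to\Z^2\xrightarrow{3}\Z^2\to\F_3^2\to0$, with the vertical maps $I-\gamma$, $I-\gamma$, $I-\bar\gamma$. Because $I-\gamma$ is injective, the snake lemma gives the exact sequence
\[
0\to\ker(I-\bar\gamma)\to C(\gamma)\xrightarrow{3}C(\gamma)\to\coker(I-\bar\gamma)\to0,
\]
so $\ker(I-\bar\gamma)\cong\ker(3\colon C(\gamma)\to C(\gamma))$.

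For the last equality, compute $H_1(M_\gamma;\Z)$ from the Wang sequence, or equivalently from the abelianization of $\pi_1=\Z^2\rtimes_\gamma\Z$. This gives $H_1(M_\gamma;\Z)\cong\Z\oplus\coker(I-\gamma)=\Z\oplus C(\gamma)$. Since $\det(I-\gamma)\neq0$, $C(\gamma)$ is finite, so $\operatorname{Tor}H_1(M_\gamma;\Z)=C(\gamma)$ and the $3$-torsion kernels agree. The consequence then follows from the final line of Theorem~\ref{thm:main-formula} by substituting $T_3(\gamma)=V_{3\Z^2}(f_\gamma)$.

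The main obstacle is the first equality. It depends on the definition of $V_{3\Z^2}$ in \cite{Kaya}, namely the image of the Reidemeister trace in $\Z[\Z^2/3\Z^2\text{-coinvariants}]$. One must check two things: that the Reidemeister classes of $f_\gamma$ map onto $\coker(I-\bar\gamma)$ compatibly (surjectivity of $\Z^2\to\F_3^2$ handles this), and that the signs all agree. I would cite the paper's justification following \eqref{eq:general-observer-identity}, and only note that the hypothesis $\det(I-\gamma)\neq0$ suffices for it, since hyperbolicity was never used beyond nondegeneracy.
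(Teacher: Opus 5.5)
Your proposal is correct and follows the paper's proof essentially step for step. The common local index $\operatorname{sgn}\det(I-\gamma)$ together with surjectivity of reduction mod $3$ gives the first equality, and the equal orders of kernel and cokernel give the second; the snake lemma for multiplication by $3$ against the injective map $I-\gamma$ identifies $\ker(I-\bar\gamma)$ with $\ker(3\colon C(\gamma)\to C(\gamma))$, and the Wang sequence identifies $\operatorname{Tor}H_1(M_\gamma;\Z)$ with $C(\gamma)$. The only thing to tidy is that the substitution should go into the second line of Theorem~\ref{thm:main-formula} (the one written with $T_3(\gamma)$), and that this final consequence, unlike the chain of equalities, needs $\gamma$ hyperbolic.
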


\begin{proof}
Reduction modulo $3$ induces a surjection
\[
 \coker(I-\gamma:\Z^2\to\Z^2)
 \longrightarrow
 \coker(I-\bar\gamma:\F_3^2\to\F_3^2).
\]
Every fixed-point class of $f_\gamma$ has the same local index
$\operatorname{sgn}\det(I-\gamma)$.  Hence no cancellation occurs when
the Reidemeister trace is pushed to the mod-$3$ quotient, and its observed
support is the whole quotient Reidemeister set.  This proves the first
equality.  The kernel and cokernel of an endomorphism of the finite vector
space $\F_3^2$ have the same cardinality.  Finally, the snake lemma applied
to multiplication by $3$ and the injective map $I-\gamma$ gives
\[
 \ker(I-\bar\gamma)\cong
 \ker\bigl(3:C(\gamma)\to C(\gamma)\bigr),
\]
which proves the remaining algebraic equalities.  The Wang sequence of
the torus mapping torus gives
\[
 H_1(M_\gamma;\Z)\cong
 \Z\oplus\coker(I-\gamma),
\]
noncanonically, and hence identifies the torsion subgroup of
$H_1(M_\gamma;\Z)$ with $C(\gamma)$.  This proves the topological
equality and the displayed reformulation.
\end{proof}

Thus the extra term in the $\mathrm{SL}_3$ formula is not merely an
abstract torsion correction: it is the number of Nielsen classes visible
through one distinguished finite regular cover.  Its values admit the
following complete classification.

\begin{proposition}\label{prop:t3-classification}
Let $\gamma\in\mathrm{SL}_2(\Z)$ with $\det(I-\gamma)\neq0$, and let
$\bar\gamma$ be its reduction modulo $3$.  Then
\[
 T_3(\gamma)
 =
 |\ker(I-\bar\gamma:\F_3^2\to\F_3^2)|
 =
 \begin{cases}
 1,&\tr(\gamma)\not\equiv2\pmod3,\\[2mm]
 3,&\tr(\gamma)\equiv2\pmod3
       \ \text{and}\ \bar\gamma\neq I,\\[2mm]
 9,&\bar\gamma=I.
 \end{cases}
\]
\end{proposition}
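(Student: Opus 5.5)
The first equality, $T_3(\gamma)=|\ker(I-\bar\gamma)|$, is already in Proposition~\ref{prop:mod3-observer}: the snake lemma for multiplication by $3$ against the injective map $I-\gamma$ identifies $\ker(3)$ on $C(\gamma)$ with $\ker(I-\bar\gamma:\F_3^2\to\F_3^2)$. So it remains to compute $\dim_{\F_3}\ker(I-\bar\gamma)\in\{0,1,2\}$ from the reduction $\bar\gamma\in\mathrm{SL}_2(\F_3)$. The kernel has order $3^k$, where $k$ is the dimension of the fixed subspace of $\bar\gamma$.

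I would split into cases using the characteristic polynomial. Since $\det\bar\gamma=1$, it is $x^2-\bar t x+1$, where $\bar t=\tr(\gamma)\bmod 3$. Then
\[
\det(I-\bar\gamma)=1-\bar t+1=2-\bar t .
\]

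\emph{Case $\tr(\gamma)\not\equiv2\pmod3$.} Here $\det(I-\bar\gamma)\neq0$ in $\F_3$, so $I-\bar\gamma$ is invertible, $k=0$, and $T_3=1$.

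\emph{Case $\tr(\gamma)\equiv2\pmod3$.} Now $\det(I-\bar\gamma)=0$, so $1$ is an eigenvalue and $k\geq1$.
\begin{itemize}[label={}]
\item If $k=2$, then $I-\bar\gamma=0$, that is, $\bar\gamma=I$, which gives $T_3=9$.
\item Conversely, if $\bar\gamma\neq I$, then $I-\bar\gamma$ is a nonzero singular $2\times2$ matrix. It therefore has rank exactly $1$, so $k=1$ and $T_3=3$.
\end{itemize}
Finally, $\bar\gamma=I$ forces $\bar t=2$, so the three cases are exhaustive and disjoint.

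There is no real obstacle. The only point needing care is that the middle case covers exactly $\bar t=2$ with $\bar\gamma\neq I$; this includes the nontrivial unipotent classes, where the characteristic polynomial is $(x-1)^2$ but $\bar\gamma$ is not diagonalizable. The hypothesis $\det(I-\gamma)\neq0$ is used only to make $I-\gamma$ injective, so that the snake-lemma identification from Proposition~\ref{prop:mod3-observer} applies.
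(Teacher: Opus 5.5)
Your proof is correct and matches the paper's argument: the snake-lemma identification for the first equality, then $\det(I-\bar\gamma)=2-\bar t$ and a rank count on $I-\bar\gamma$. The paper phrases the middle case as ``$I-\bar\gamma$ is nonzero nilpotent'' where you say ``nonzero singular,'' but both give the same rank-one conclusion.
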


\begin{proof}
The first equality was established in the proof of
Proposition~\ref{prop:three-sectors}.  Since
$\det\bar\gamma=1$,
\[
 \det(I-\bar\gamma)=1-\tr(\bar\gamma)+\det\bar\gamma
 =2-\tr(\bar\gamma).
\]
If $\tr(\gamma)\not\equiv2\pmod3$, then $I-\bar\gamma$ is invertible and
the kernel has one element.

Suppose $\tr(\gamma)\equiv2\pmod3$.  The characteristic polynomial of
$\bar\gamma$ is then $(x-1)^2$.  If $\bar\gamma=I$, the kernel is all of
$\F_3^2$ and has order $9$.  Otherwise, $I-\bar\gamma$ is a nonzero
nilpotent $2\times2$ matrix.  It has rank one, so its kernel is
one-dimensional and has order $3$.
\end{proof}

\begin{theorem}[Classification at fixed Nielsen data]
\label{thm:classification}
Let $\gamma,\gamma'\in\mathrm{SL}_2(\Z)$ be hyperbolic with the same
trace.  Then
\[
 N(f_\gamma^k)=N(f_{\gamma'}^k)
 \qquad(k\geq1),
\]
where $f_\gamma,f_{\gamma'}:T^2\to T^2$ are the induced toral
automorphisms.  Nevertheless,
\[
 \dim\HH_0^\gamma\bigl(\SkAlg_{\mathrm{SL}_3}(T^2)\bigr)
 -
 \dim\HH_0^{\gamma'}\bigl(\SkAlg_{\mathrm{SL}_3}(T^2)\bigr)
 =
 T_3(\gamma)-T_3(\gamma').
\]
In particular:
\begin{enumerate}[label=\textup{(\roman*)},leftmargin=2.2em]
\item if the common trace is not congruent to $2$ modulo $3$, the two
      dimensions are equal;
\item if the common trace is congruent to $2$ modulo $3$, the only
      possible values of $T_3$ are $3$ and $9$, and the two possible
      summand dimensions differ by $6$.
\end{enumerate}
\end{theorem}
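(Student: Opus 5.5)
The plan is to derive all parts of Theorem~\ref{thm:classification} from the trace-only dependence of the periodic Nielsen numbers, combined with Theorem~\ref{thm:main-formula} and Proposition~\ref{prop:t3-classification}.

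\emph{Equality of Nielsen numbers.} For a hyperbolic toral automorphism, the Nielsen number of the $k$-th iterate is $N(f_\gamma^k)=\lvert\det(I-\gamma^k)\rvert=N_k$, since every fixed-point class is essential and carries the same index. If $\lambda^{\pm1}$ are the eigenvalues of $\gamma$, then
\[
\det(I-\gamma^k)=2-\lambda^k-\lambda^{-k}=2-\tr(\gamma^k).
\]
The sequence $\tr(\gamma^k)$ satisfies the recurrence
\[
\tr(\gamma^{k+1})=t\,\tr(\gamma^k)-\tr(\gamma^{k-1}),
\]
with $\tr(\gamma^0)=2$ and $\tr(\gamma^1)=t$. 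It is therefore a polynomial in $t=\tr(\gamma)$. Consequently, if $\gamma$ and $\gamma'$ have the same trace, then $N_k(\gamma)=N_k(\gamma')$ for every $k$.

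\emph{Difference formula.} Apply Theorem~\ref{thm:main-formula}, in its second form, to $\gamma$ and to $\gamma'$. The terms
\[
\frac{N_1^2+6N_1+3N_2+2N_3/N_1}{6}
\]
depend only on $N_1$, $N_2$ and $N_3$, so they agree for the two matrices by the previous step. Subtracting the two formulas leaves exactly $T_3(\gamma)-T_3(\gamma')$.

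\emph{The two cases.} Both cases follow from Proposition~\ref{prop:t3-classification}.
\begin{enumerate}[label=\textup{(\roman*)},leftmargin=2.2em]
\item If the common trace is not congruent to $2$ modulo $3$, then $T_3(\gamma)=T_3(\gamma')=1$, so the difference vanishes.
\item If the common trace is congruent to $2$ modulo $3$, then $T_3$ takes only the values $3$ (when $\bar\gamma\neq I$) or $9$ (when $\bar\gamma=I$). Hence the difference of the two dimensions lies in $\{0,\pm6\}$, and the two possible summand dimensions differ by $6$.
\end{enumerate}

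\emph{Main obstacle.} The only delicate point is the identification $N(f_\gamma^k)=\lvert\det(I-\gamma^k)\rvert$. It rests on the equal-index property of fixed-point classes already used in the proof of Proposition~\ref{prop:mod3-observer}, so I would cite that here. One could also remark that both values in (ii) are realized, for example by $\gamma$ with $\bar\gamma=I$ against a conjugate-trace matrix with $\bar\gamma\neq I$. However, the statement only asserts that these are the possible values, so no realization argument is needed.
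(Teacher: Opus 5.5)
Your proposal is correct and follows the paper's proof essentially step for step. Both arguments use the trace recurrence together with $N(f_\gamma^k)=\lvert\det(I-\gamma^k)\rvert=\lvert 2-\tr(\gamma^k)\rvert$ to get equal periodic Nielsen data, then Theorem~\ref{thm:main-formula} for the difference formula, and Proposition~\ref{prop:t3-classification} for (i) and (ii).
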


\begin{proof}
For matrices in $\mathrm{SL}_2(\Z)$, the sequence
$a_k=\tr(\gamma^k)$ is determined by
\[
 a_0=2,\qquad a_1=\tr(\gamma),\qquad
 a_{k+1}=\tr(\gamma)a_k-a_{k-1}.
\]
Thus equal traces give equal traces of all powers.  For a hyperbolic
toral automorphism, every fixed-point class of every iterate is
essential and
\[
 N(f_\gamma^k)=|\det(I-\gamma^k)|
              =|2-\tr(\gamma^k)|.
\]
This standard toral Nielsen formula may also be found in
\cite{Jiang}.  Hence the two periodic Nielsen sequences agree.  The
difference formula now follows from Theorem~\ref{thm:main-formula},
because all $N_k$ agree.  The two final assertions are
Proposition~\ref{prop:t3-classification}.
\end{proof}

\begin{remark}[Periodic Nielsen data and trace]\label{rem:nielsen-trace}
For hyperbolic $\gamma\in\mathrm{SL}_2(\Z)$, the periodic Nielsen
sequence is equivalent to the trace, rather than literally being the
trace sequence.  Indeed, the trace determines all
$N(f_\gamma^k)=|2-\tr(\gamma^k)|$ by the recurrence used above.
Conversely,
\[
 N(f_\gamma)=|2-t|,\qquad N(f_\gamma^2)=t^2-4
\]
recover $|t|$ and then its sign, so they recover
$t=\tr(\gamma)$.
\end{remark}

\section{Identical \texorpdfstring{$\mathrm{GL}_N$}{GLN}-skein dimensions
and distinct \texorpdfstring{$\mathrm{SL}_3$}{SL3} quantum-torus summands}

The distinction in Theorem~\ref{thm:classification} occurs in infinitely
many hyperbolic torus bundles.  Following \cite[Definition~5.1]{Kaya},
write
\[
 V_f(B)=
 \max\{V_H(f):H\ \text{is a finite observer with}\ [\pi_1(X):H]\leq B\}
\]
for the finite-cover visibility profile.  For a positive integer $D$,
put
\[
 \delta_D(B)=\max\{d:d\mid D,\ d\leq B\}.
\]
The following profile computation is \cite[Theorem~8.2]{Kaya}; we
include the short proof to keep the present paper self-contained.

\begin{lemma}[Exact toral visibility profile]\label{lem:toral-profile}
Let $\eta\in\mathrm{SL}_2(\Z)$ be hyperbolic and put
$D=\lvert\det(I-\eta)\rvert$.  Then for every $B\geq1$,
\[
 V_{f_\eta}(B)=\delta_D(B).
\]
\end{lemma}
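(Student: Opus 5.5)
The plan is to prove the two inequalities $V_{f_\eta}(B)\le\delta_D(B)$ and $V_{f_\eta}(B)\ge\delta_D(B)$ separately. Both rest on the observer framework of \cite{Kaya}: a finite observer is a finite-index subgroup $H\le\pi_1(T^2)=\Z^2$ that is invariant under $\eta$ (equivalently, $f_\eta$-compatible). $V_H(f_\eta)$ counts the Nielsen classes whose images in the quotient Reidemeister set survive with nonzero coefficient. The Reidemeister set of $f_\eta$ is $\coker(I-\eta)$, a finite abelian group of order $D$. Pushing to $\Z^2/H$ gives the quotient $\coker(I-\eta)\to\Z^2/(H+(I-\eta)\Z^2)$.

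All fixed-point classes have the same index $\operatorname{sgn}\det(I-\eta)$, as used in \eqref{eq:general-observer-identity}. Hence no cancellation occurs, and
\[
 V_H(f_\eta)=\bigl[\Z^2:H+(I-\eta)\Z^2\bigr].
\]
I would record this identity first, citing the argument already given for the canonical observers.

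\emph{Upper bound.} The index $[\Z^2:H+(I-\eta)\Z^2]$ is the order of a quotient of $\coker(I-\eta)$, so by Lagrange it divides $D$. It is also a quotient of $\Z^2/H$, so it divides $[\Z^2:H]\le B$ and in particular is at most $B$. Therefore $V_H(f_\eta)$ is a divisor of $D$ that is at most $B$, which gives $V_H(f_\eta)\le\delta_D(B)$.

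\emph{Lower bound.} Let $d=\delta_D(B)$. I need an invariant subgroup $H$ with $[\Z^2:H]\le B$ and $[\Z^2:H+(I-\eta)\Z^2]=d$. The natural choice is the preimage in $\Z^2$ of a subgroup $K\le C:=\coker(I-\eta)$ with $|C/K|=d$. Then $H\supseteq(I-\eta)\Z^2$, so $H$ is $\eta$-invariant: $\eta$ acts trivially on $C$ because $\eta x\equiv x$ modulo $(I-\eta)\Z^2$. Moreover $[\Z^2:H]=|C/K|=d\le B$ and $H+(I-\eta)\Z^2=H$, so $V_H(f_\eta)=d$.

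It remains to produce a subgroup of $C$ of index exactly $d$. Since $C$ is finite abelian and $d$ divides $|C|=D$, such a subgroup exists: finite abelian groups have subgroups of every order dividing the group order, and one dualizes to get the index statement. The main point needing care is whether the definition of a finite observer in \cite{Kaya} allows arbitrary invariant finite-index subgroups, and whether it requires normality or characteristic-type conditions. Since $\Z^2$ is abelian, normality is automatic, and I expect only $\eta$-invariance is required, which the construction above provides.
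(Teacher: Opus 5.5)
Your proposal is correct and follows the paper's proof essentially step for step. The shared steps are the no-cancellation identification of $V_H(f_\eta)$ with $[\Z^2:H+(I-\eta)\Z^2]$ (the paper phrases this as the order of the image of $\coker(I-\eta)$), the upper bound from this being a divisor of $D$ that is at most $[\Z^2:H]\le B$, and the lower bound via the preimage of an index-$d$ subgroup of $\coker(I-\eta)$, which is $\eta$-invariant because $\eta$ acts trivially on that cokernel. The only cosmetic difference is that the paper picks a surjection $\coker(I-\eta)\twoheadrightarrow Q_d$ rather than a subgroup; also, no dualization is needed, since a subgroup of order $D/d$ already has index $d$.
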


\begin{proof}
Write $C=\coker(I-\eta)$, so $|C|=D$.  Twisted conjugacy in the
abelian group $\Z^2$ reduces to cosets of $\im(I-\eta)$, so the
Reidemeister classes of $f_\eta$ are canonically indexed by $C$.
Every class is essential with the same local index
$\operatorname{sgn}\det(I-\eta)$, so no cancellation occurs when the
Reidemeister trace is pushed to any finite observer, and for an
$\eta$-invariant subgroup $H\leq\Z^2$ of finite index, $V_H(f_\eta)$
is the order of the image of the natural homomorphism
\[
 C\longrightarrow
 \coker\bigl(I-\bar\eta:\Z^2/H\to\Z^2/H\bigr).
\]
This image is a quotient of $C$, so its order divides $D$; it also has
order at most $\lvert\Z^2/H\rvert=[\Z^2:H]\leq B$.  Hence
$V_{f_\eta}(B)\leq\delta_D(B)$.

Conversely, let $d\mid D$.  Choose a surjection
$\pi_d:C\twoheadrightarrow Q_d$ onto an abelian group of order $d$,
which exists because $C$ is finite abelian, and let $H_d\leq\Z^2$ be
the preimage of $\ker\pi_d$ under the projection
$\Z^2\twoheadrightarrow C$.  Since $\eta$ acts as the identity on $C$
(indeed $\eta x-x\in\im(I-\eta)$ for every $x$), the subgroup $H_d$ is
$\eta$-invariant of index $d$, the induced endomorphism on
$\Z^2/H_d\cong Q_d$ is the identity, and the class map
$C\to\coker(I-\mathrm{id})=Q_d$ is the surjection $\pi_d$.  Hence
$V_{H_d}(f_\eta)=d$.  Maximizing over divisors $d\leq B$ gives the
reverse inequality.
\end{proof}

\begin{theorem}\label{thm:infinite-family}
For every $r\in\Z\setminus\{0\}$, set
\[
 \gamma_{1,r}=
 \begin{pmatrix}
 0&-1\\
 1&9r+2
 \end{pmatrix},
 \qquad
 \gamma_{2,r}=
 \begin{pmatrix}
 1&3\\
 3r&9r+1
 \end{pmatrix}.
\]
Then:
\begin{enumerate}[label=\textup{(\roman*)},leftmargin=2.2em]
\item $\gamma_{1,r}$ and $\gamma_{2,r}$ are hyperbolic elements of
      $\mathrm{SL}_2(\Z)$ with common trace $9r+2$;
\item for every $N\geq1$,
      \[
      \dim\Sk_{\mathrm{GL}_N}(M_{\gamma_{1,r}})
      =
      \dim\Sk_{\mathrm{GL}_N}(M_{\gamma_{2,r}});
      \]
\item their $\mathrm{SL}_3$ quantum-torus summands satisfy
      \[
      \dim\HH_0^{\gamma_{2,r}}
        \bigl(\SkAlg_{\mathrm{SL}_3}(T^2)\bigr)
      -
      \dim\HH_0^{\gamma_{1,r}}
        \bigl(\SkAlg_{\mathrm{SL}_3}(T^2)\bigr)
      =6.
      \]
\item their periodic Nielsen numbers and Nielsen zeta functions are
      identical; moreover, for every $k,B\geq1$,
      \[
       V_{f_{\gamma_{1,r}}^k}(B)
       =
       V_{f_{\gamma_{2,r}}^k}(B),
      \]
      so their finite-cover visibility profiles agree at every iterate.
\end{enumerate}
The associated torus bundles are non-homeomorphic.
\end{theorem}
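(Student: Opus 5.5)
Most of the work reduces to earlier results; the one new ingredient is the non-homeomorphism claim. For (i), I will compute the traces and determinants directly. Both matrices have trace $9r+2$. The determinants are $0\cdot(9r+2)+1=1$ and $(9r+1)-9r=1$. Since $r\neq0$, we have $|9r+2|\geq7>2$, so both matrices are hyperbolic.

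For (ii), I will invoke Proposition~\ref{prop:gln-freeness}, which says $\dim\Sk_{\mathrm{GL}_N}(M_\gamma)$ depends only on $N_1,\ldots,N_N$. By the trace recurrence used in Theorem~\ref{thm:classification}, these numbers depend only on $\tr(\gamma)$. For (iii), I will apply Theorem~\ref{thm:classification}, reducing the difference to $T_3(\gamma_{2,r})-T_3(\gamma_{1,r})$, and then use Proposition~\ref{prop:t3-classification}. Since $9r+2\equiv2\pmod 3$, we are in the nontrivial case. Reducing mod $3$ gives $\bar\gamma_{2,r}=I$, hence $T_3=9$. On the other hand, $\bar\gamma_{1,r}=\left(\begin{smallmatrix}0&-1\\1&2\end{smallmatrix}\right)\neq I$, hence $T_3=3$. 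The difference is therefore $6$.

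For (iv), the Nielsen numbers $N(f^k)=|2-\tr(\gamma^k)|$ agree because the traces agree, and so the Nielsen zeta functions, built from these numbers, agree. For the profiles, $f_\gamma^k=f_{\gamma^k}$ with $\gamma^k$ hyperbolic. Lemma~\ref{lem:toral-profile} gives $V_{f_{\gamma^k}}(B)=\delta_{D_k}(B)$ with $D_k=N_k$, which is common to both matrices.

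The main obstacle is non-homeomorphism, since every invariant listed in (ii) and (iv) agrees. I will use $H_1$. By the Wang sequence in Proposition~\ref{prop:mod3-observer}, $\operatorname{Tor}H_1(M_\gamma)\cong\coker(I-\gamma)$, which is a homeomorphism invariant. Both cokernels have order $D=|{-9r}|=9|r|$. For $\gamma_{2,r}$ we have
\[
I-\gamma_{2,r}=\begin{pmatrix}0&-3\\-3r&-9r\end{pmatrix},
\]
and all its entries are divisible by $3$. Hence the first determinantal divisor is divisible by $3$, and the cokernel surjects onto $(\Z/3)^2$; by Proposition~\ref{prop:mod3-observer} this is equivalent to $T_3=9$. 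For $\gamma_{1,r}$, the matrix $I-\gamma_{1,r}$ has an entry $-1$, so its cokernel is cyclic of order $9|r|$. A cyclic group cannot surject onto $(\Z/3)^2$, so the torsion subgroups are non-isomorphic. Since $M_\gamma$ is a torus bundle over $S^1$, the torsion of $H_1$ is a topological invariant regardless of any fibration ambiguity. Alternatively, $T_3$ itself is intrinsic: it is the number of $3$-torsion elements of $\operatorname{Tor}H_1$. Either way, $9\neq3$ distinguishes the two manifolds. I expect this invariant to suffice without needing any classification of torus bundles.
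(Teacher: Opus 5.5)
Your proposal is correct and follows the paper's proof essentially step for step. Parts (i)--(iv) reduce to Proposition~\ref{prop:gln-freeness}, Theorem~\ref{thm:classification}, Proposition~\ref{prop:t3-classification} and Lemma~\ref{lem:toral-profile} exactly as in the paper, and non-homeomorphism comes from $\operatorname{Tor}H_1(M_\gamma)\cong\coker(I-\gamma)$ being cyclic of order $9|r|$ for $\gamma_{1,r}$ but non-cyclic for $\gamma_{2,r}$; you detect the non-cyclicity through the fact that every entry of $I-\gamma_{2,r}$ is divisible by $3$ (equivalently $T_3=9$ versus $3$), where the paper writes out the Smith normal form $\Z/3\Z\oplus\Z/(3|r|)\Z$, which is only a cosmetic difference.
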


\begin{proof}
Both matrices have determinant one and trace $9r+2$.  Since
$r\neq0$, the absolute value of the trace is greater than $2$, so both
are hyperbolic.  This proves (i).

The $\mathrm{GL}_N$-skein dimensions of a hyperbolic genus-one mapping
torus are determined by the trace of the monodromy: this is
Proposition~\ref{prop:gln-freeness}, and it also follows by
specializing the skein partition function computed in
\cite{BierentJordanVancraeynestVazirani}.  This proves (ii).

Modulo $3$,
\[
 \bar\gamma_{1,r}=
 \begin{pmatrix}0&-1\\1&2\end{pmatrix}\neq I,
 \qquad
 \bar\gamma_{2,r}=I.
\]
Proposition~\ref{prop:t3-classification} therefore gives
\[
 V_{3\Z^2}(f_{\gamma_{1,r}})
 =T_3(\gamma_{1,r})=3,
 \qquad
 V_{3\Z^2}(f_{\gamma_{2,r}})
 =T_3(\gamma_{2,r})=9.
\]
Thus the canonical degree-$9$ observer values differ.  Part (iii)
follows from Theorem~\ref{thm:classification}.

The equality of all periodic Nielsen numbers follows from
Theorem~\ref{thm:classification}, and therefore their Nielsen zeta
functions
\[
 \zeta_N(f;z)=
 \exp\left(\sum_{k\geq1}\frac{N(f^k)}{k}z^k\right)
\]
coincide.  Lemma~\ref{lem:toral-profile}, applied to the hyperbolic
iterates $\eta^k$, gives, for every hyperbolic
$\eta\in\mathrm{SL}_2(\Z)$ and every $k\geq1$,
\[
 V_{f_\eta^k}(B)=\delta_{|\det(I-\eta^k)|}(B).
\]
Since $\gamma_{1,r}$ and $\gamma_{2,r}$ have the same periodic Nielsen
numbers, their visibility profiles agree for every iterate and every
observer budget.  Thus the budget-optimized profiles agree even though
the distinguished degree-$9$ observer values do not.  This proves (iv).

Finally,
\[
 \coker(I-\gamma_{1,r})\cong\Z/(9|r|)\Z,
 \qquad
 \coker(I-\gamma_{2,r})
 \cong\Z/3\Z\oplus\Z/(3|r|)\Z.
\]
The first group is cyclic of order $9|r|$, whereas the second is never
cyclic: its two cyclic factors have non-coprime orders $3$ and $3|r|$.
Since the torsion subgroup of $H_1(M_\gamma;\Z)$ is
$\coker(I-\gamma)$, the mapping tori cannot be homeomorphic.
\end{proof}

\begin{remark}[Comparison with $\mathrm{GL}_N$ mutant detection]
\label{rem:bjvv-comparison}
Bierent--Jordan--Vancraeynest--Vazirani observed that
$\mathrm{GL}_N$-skein dimensions distinguish the mutant mapping tori
associated with a shear and its negative already for $N=2$
\cite[Remark~1.9]{BierentJordanVancraeynestVazirani}.
Theorem~\ref{thm:infinite-family} gives a complementary limitation:
on an infinite hyperbolic family, the dimensions agree for every $N$,
whereas the $\mathrm{SL}_3$ quantum-torus summand dimensions differ.
No claim is made that our pairs are mutants or that the full
$\mathrm{SL}_3$-skein-module dimensions differ.
\end{remark}

\begin{example}
For $r=-1$, the matrices are
\[
 \gamma_{1,-1}=
 \begin{pmatrix}0&-1\\1&-7\end{pmatrix},
 \qquad
 \gamma_{2,-1}=
 \begin{pmatrix}1&3\\-3&-8\end{pmatrix}.
\]
Their common first three Nielsen numbers are
\[
 N_1=9,\qquad N_2=45,\qquad N_3=324.
\]
Theorem~\ref{thm:main-formula} becomes
\[
 \dim\HH_0^\gamma\bigl(\SkAlg_{\mathrm{SL}_3}(T^2)\bigr)
 =57+V_{3\Z^2}(f_\gamma).
\]
The canonical mod-$3$ observer sees $3$ classes for $\gamma_{1,-1}$ and
$9$ for $\gamma_{2,-1}$.  Consequently the two dimensions are $60$ and
$66$, respectively.
\end{example}

\section{Concluding observation}

Theorem~\ref{thm:general-sln-formula} shows that the empty-skein summand
at rank $N$ is controlled by two layers of fixed-point data: the periodic
Nielsen numbers $N_1,\ldots,N_N$, and the canonical visible Nielsen
numbers at moduli dividing $N$.  Thus the rank selects which finite
observers can enter the Weyl correction.  At rank three the only
nontrivial modulus is $3$, and the correction is exactly
$V_{3\Z^2}(f_\gamma)$.

Theorem~\ref{thm:infinite-family} shows that this correction is not
recoverable from the complete sequence of $\mathrm{GL}_N$-skein
dimensions, even among hyperbolic torus bundles.  The periodic Nielsen
sequence records the orders $|\coker(I-\gamma^k)|$, and the
budget-optimized toral visibility profiles
(Lemma~\ref{lem:toral-profile}) are likewise determined by these
orders, as is the entire $\mathrm{GL}_N$-skein spectrum
(Proposition~\ref{prop:gln-freeness}).  By contrast, the $\mathrm{SL}_3$ Weyl
correction singles out the canonical degree-$9$ mod-$3$ observer.  Its
appearance is the first instance of the general coinvariant-torsion
mechanism proved above.

\end{document}